\def\Dj{\hbox{D\kern-.73em\raise.30ex\hbox{-}
\raise-.30ex\hbox{}}}
\def\dj{\hbox{d\kern-.33em\raise.80ex\hbox{-}
\raise-.80ex\hbox{\kern-.40em}}}
\newtheorem{theorem}{Theorem}[section]
\newtheorem{lemma}[theorem]{Lemma}
\newtheorem{corollary}[theorem]{Corollary}
\newtheorem{remark}[theorem]{Remark}
\begin{document}

\title{\textbf{Note on the Sum of Powers of Signless Laplacian Eigenvalues
of Graphs}}
\author{\c{S}. Burcu Bozkurt Alt\i nda\u{g}\thanks{%
Corresponding author} and Durmu\c{s} Bozkurt \\
\textit{Department of Mathematics, Science Faculty, }\\
\textit{Sel\c{c}uk University, 42075, Campus, Konya, Turkey}\\
\textit{srf\_burcu\_bozkurt@hotmail.com, dbozkurt@selcuk.edu.tr}}
\maketitle

\begin{abstract}
For a simple graph $G$ and a real number $\alpha $ $\left( \alpha \neq
0,1\right) $ the graph invariant $s_{\alpha }\left( G\right) $ is equal to
the sum of powers of signless Laplacian eigenvalues of $G$. In this note, we
present some new bounds on $s_{\alpha }\left( G\right) $. As a result of
these bounds, we also give some results on incidence energy.
\end{abstract}

\section{\protect\bigskip\ Introduction}

Let $G$ be a simple graph with $n$\ vertices and $m$ edges. Let $V\left(
G\right) =\left\{ v_{1},v_{2},\ldots ,v_{n}\right\} $ be the set of vertices
of $G$. For $v_{i}\in V\left( G\right) $, the degree of the vertex $v_{i}$,
denoted by $d_{i},$ is equal to the number of vertices adjacent to $v_{i}.$
Throughout this paper, the maximum, the second maximum and the minimum \
vertex degrees of $G$ will be denoted by $\Delta _{1}$, $\Delta _{2}$ and $%
\delta $, respectively.

Let $A\left( G\right) $ be the $\left( 0,1\right) $-adjacency matrix of a
graph $G$. The eigenvalues of $G$ are the eigenvalues of $A\left( G\right) $ 
\cite{6} and denoted by $\lambda _{1}\geq \lambda _{2}\geq \cdots \geq
\lambda _{n}.$ Then the energy of a graph $G$ is defined by \cite{17}%
\begin{equation*}
E=E\left( G\right) =\dsum\limits_{i=1}^{n}\left\vert \lambda _{i}\right\vert
.
\end{equation*}%
There is an extensive literature on this topic. For more details see \cite%
{18,28} and the references cited therein.

The concept of graph energy was extended to energy of any matrix in the
following manner \cite{36}. Recall that the singular values of any (real)
matrix $M$ are equal to the square roots of the eigenvalues of $MM^{T}$,
where $M^{T}$ is the transpose of $M$. Then the energy of the matrix $M$ is
defined as the sum of its singular values. Clearly, $E\left( A\left(
G\right) \right) =E\left( G\right) $.

Let $D\left( G\right) $ be the diagonal matrix of vertex degrees of $G$.
Then the Laplacian matrix of $G$ is $L\left( G\right) =D\left( G\right)
-A\left( G\right) $ and the signless Laplacian matrix of $G$ is $Q\left(
G\right) =D\left( G\right) +A\left( G\right) $. As well known in spectral
graph theory, both $L\left( G\right) $ and $Q\left( G\right) $ are real
symmetric and positive semidefinite matrices, so their eigenvalues are
non-negative real numbers. Let $\mu _{1}\geq \mu _{2}\geq \cdots \geq \mu
_{n}=0$ be the eigenvalues of $L\left( G\right) $ and let $q_{1}\geq
q_{2}\geq \cdots \geq q_{n}$ be the eigenvalues of $Q\left( G\right) $.
These eigenvalues are called Laplacian and signless Laplacian eigenvalues of 
$G$, respectively. For details on Laplacian and signless Laplacian
eigenvalues, see \cite{7,8,9,10,33,34}.

The incidence matrix $I\left( G\right) $ of a graph $G$ with the vertex set $%
V\left( G\right) =\left\{ v_{1},v_{2},\ldots ,v_{n}\right\} $ and edge set $%
E\left( G\right) =\left\{ e_{1},e_{2},\ldots ,e_{m}\right\} $ is the matrix
whose $\left( i,j\right) $-entry is $1$ if the vertex $v_{i}$ is incident
with the edge $e_{j}$, and \ is $0$ otherwise. In \cite{24}, Jooyandeh et
al. motivated the idea in \cite{36} and defined the incidence energy of $G$,
denoted by $IE\left( G\right) $, as the sum of singular values of $I\left(
G\right) $. Since $Q\left( G\right) =I\left( G\right) I\left( G\right) ^{T}$%
, it was later proved that \cite{20} 
\begin{equation*}
IE=IE\left( G\right) =\dsum\limits_{i=1}^{n}\sqrt{q_{i}}
\end{equation*}%
For the basic properties of $IE$ involving also its lower and upper bounds,
see \cite{3,4,13,20,21,24,32,38,42,43}.

In \cite{30} Liu and Lu introduced a new graph invariant based on Laplacian
eigenvalues 
\begin{equation*}
LEL=LEL\left( G\right) =\dsum\limits_{i=1}^{n-1}\sqrt{\mu _{i}}
\end{equation*}%
and called it Laplacian energy like invariant. At first it was considered
that \cite{30} $LEL$ shares similar properties with Laplacian energy \cite%
{22}. Then it was shown that it is much more similar to the ordinary graph
energy \cite{23}. For survey and details on $LEL$, see \cite{29}.

For a graph $G$ with $n$ vertices and a real number $\alpha $, to avoid
trivialities it may be required that $\alpha \neq 0,1$, the sum of the $%
\alpha $th powers of the non-zero Laplacian eigenvalues is defined as \cite%
{41}%
\begin{equation*}
\sigma _{\alpha }=\sigma _{\alpha }\left( G\right)
=\dsum\limits_{i=1}^{n-1}\mu _{i}^{\alpha }.
\end{equation*}%
The cases $\alpha =0$ and $\alpha =1$ are trivial as $\sigma _{0}=n-1$ and $%
\sigma _{1}=2m$, where $m$ is the number of edges of $G$. Note that $\sigma
_{1/2}$ is equal to $LEL$. It is worth noting that $n\sigma _{-1}$ is also
equal to the Kirchhoff index of $G$ (one can refer to the papers \cite%
{2,19,37} for its definition and extensive applications in the theory of
electric circuits, probabilistic theory and chemistry). Recently, various
properties and the estimates of $\sigma _{\alpha }$ have been well studied
in the literature. For details, see \cite{14,31,39,41,43}.

Motivating the definitions of $IE,$ $LEL$ and $\sigma _{\alpha }$, Akbari et
al. \cite{1} introduced the sum of the $\alpha $th powers of the signless
Laplacian eigenvalues of $G$ as 
\begin{equation*}
s_{\alpha }=s_{\alpha }\left( G\right) =\dsum\limits_{i=1}^{n}q_{i}^{\alpha }
\end{equation*}%
and they also gave some relations between $\sigma _{\alpha }$ and $s_{\alpha
}$. In this sum, the cases $\alpha =0$ and $\alpha =1$ are trivial as $%
s_{0}=n$ and $s_{1}=2m$. Note that $s_{1/2}$ is equal to the incidence
energy $IE$. Note further that Laplacian eigenvalues and signless Laplacian
eigenvalues of bipartite graphs coincide \cite{7,33,34}. Therefore, for
bipartite graphs $\sigma _{\alpha }$ is equal to $s_{\alpha }$ \cite{3} and $%
LEL$ is equal to $IE$ \cite{20}. Recently some properties and the lower and
upper bounds of $s_{\alpha }$ have been established in \cite{1,3,27,32}.

In this paper, we obtain some new bounds on $s_{\alpha }$ of bipartite
graphs which improve the some bounds in \cite{14}. In addition to this, we
extend these bounds to non-bipartite graphs. As a result of these bounds, we
also present some results on incidence energy.

\section{Lemmas}

\bigskip Let $t=t\left( G\right) $ denotes the number of spanning trees of $%
G $. Let $\overline{G}$ be the complement of $G$ and let $G_{1}\times G_{2}$
be the Cartesian product of the graphs $G_{1}$ and $G_{2}$ \cite{6}. Now, we
give two auxiliary quantities for a graph $G$ as%
\begin{equation}
t_{1}=t_{1}\left( G\right) =\frac{2t\left( G\times K_{2}\right) }{t\left(
G\right) }\text{ and }T=T\left( G\right) =\frac{1}{2}\left[ \Delta
_{1}+\delta +\sqrt{\left( \Delta _{1}-\delta \right) ^{2}+4\Delta _{1}}%
\right]
\end{equation}%
where $\Delta _{1}$ and $\delta $ are the maximum and the minimum vertex
degrees of $G$, respectively.

\begin{lemma}
\cite{25} Let $G$ be a graph with $n$ vertices and $m$ edges. Then%
\begin{equation}
\dsum\limits_{i=1}^{n}d_{i}^{2}\leq m\left( \frac{2m}{n-1}+n-2\right) .
\end{equation}%
Moreover, if $G$ is connected, then the equality holds in (2) if and only $G$
is either a star $K_{1,n-1}$ or a complete graph $K_{n}.$
\end{lemma}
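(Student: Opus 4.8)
The inequality (2) is de Caen's bound, and the plan is to recast it in a complement-symmetric form that trivialises the equality analysis and then to establish the resulting inequality by an extremal argument. Writing $\overline{d}_{i}=n-1-d_{i}$ for the degree of $v_{i}$ in $\overline{G}$ and $\overline{m}=\binom{n}{2}-m$ for the number of edges of $\overline{G}$, the identity $\sum_{i=1}^{n}d_{i}^{2}=2m(n-1)-\sum_{i=1}^{n}d_{i}\overline{d}_{i}$ together with $2m-n(n-1)=-2\overline{m}$ shows that
\begin{equation*}
m\left( \frac{2m}{n-1}+n-2\right) -\sum_{i=1}^{n}d_{i}^{2}=\sum_{i=1}^{n}d_{i}\overline{d}_{i}-\frac{2m\,\overline{m}}{n-1}.
\end{equation*}
Hence (2) is equivalent to $\sum_{i=1}^{n}d_{i}\overline{d}_{i}\geq \frac{2m\,\overline{m}}{n-1}$. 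First I would record the double-counting identity $\sum_{i=1}^{n}d_{i}^{2}=\sum_{v_{i}v_{j}\in E(G)}(d_{i}+d_{j})$, which underlies the rearrangement above and is also the natural quantity to estimate edge-by-edge.

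To prove the reduced inequality I would argue by extremality. Since $d_{i}+\overline{d}_{i}=n-1$ is constant, the quantity $\sum_{i}d_{i}\overline{d}_{i}=\sum_{i}d_{i}(n-1-d_{i})$ depends only on the degree sequence, while $\sum_{i}d_{i}^{2}$ is a convex function of that sequence subject to $\sum_{i}d_{i}=2m$ and $0\leq d_{i}\leq n-1$. The crux is that the box relaxation alone is too weak: it is maximised by placing all mass on coordinates equal to $0$ or $n-1$, which need not be graphic, so graphicality must enter. The route I would take is a compression/shifting argument showing that, for fixed $n$ and $m$, $\sum_{i}d_{i}^{2}$ is maximised by a threshold graph; on such graphs the degree sequence is completely determined by $m$, and one then checks that the resulting value is at most $m(\tfrac{2m}{n-1}+n-2)$. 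This reduction to threshold graphs, and the concluding estimate on them, is the step I expect to be the main obstacle, since it is precisely where de Caen's bound acquires its content.

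For the equality discussion the complement-symmetric form is decisive. Both sides of $\sum_{i}d_{i}\overline{d}_{i}\geq \frac{2m\,\overline{m}}{n-1}$ are invariant under $G\mapsto \overline{G}$, so the equality class is closed under complementation. Assuming $G$ connected, I would trace the equality case back through the compression argument to force the extremal degree sequence, which leaves only the complete split configurations, namely $K_{n}$ and $K_{1,n-1}$; consistently with the symmetry, their complements $\overline{K_{n}}=nK_{1}$ and $\overline{K_{1,n-1}}=K_{n-1}\cup K_{1}$ are disconnected. A direct check that both attain equality in (2) — giving $\sum_{i}d_{i}^{2}=n(n-1)^{2}=m(\tfrac{2m}{n-1}+n-2)$ for $K_{n}$ and $\sum_{i}d_{i}^{2}=n(n-1)$ for $K_{1,n-1}$ — then completes the characterisation.
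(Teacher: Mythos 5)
Your algebraic reduction is correct: with $\overline{d}_{i}=n-1-d_{i}$ and $\overline{m}=\binom{n}{2}-m$, inequality (2) is indeed equivalent to $\sum_{i=1}^{n}d_{i}\overline{d}_{i}\geq \frac{2m\overline{m}}{n-1}$, and your direct verification that $K_{n}$ and $K_{1,n-1}$ attain equality is fine. But what you have written is a proof plan, not a proof: the entire content of de Caen's inequality is concentrated in the step you yourself flag as ``the main obstacle'' --- the compression/shifting argument reducing the maximization of $\sum_{i}d_{i}^{2}$ to threshold graphs, and the verification of the bound on that extremal family --- and neither is carried out. Note that the paper does not prove this lemma either; it quotes it from \cite{25}, so the burden of a blind attempt was precisely to supply the argument that you have deferred.

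Beyond incompleteness, one assertion in your plan is false: a threshold graph with $n$ vertices and $m$ edges does \emph{not} have its degree sequence determined by $m$. For example, $K_{1,3}$ and $K_{3}\cup K_{1}$ are both threshold graphs with $n=4$ and $m=3$, with degree sequences $\left( 3,1,1,1\right) $ and $\left( 2,2,2,0\right) $ respectively. The maximizers of $\sum_{i}d_{i}^{2}$ for fixed $n$ and $m$ (by the Ahlswede--Katona theorem) are the quasi-star and quasi-complete graphs, and which of the two wins depends on $m$ in a delicate way; checking de Caen's bound across this whole family is a genuine computation, not a single evaluation, and it is exactly where the work lies. The equality characterization suffers the same defect: tracing equality ``back through the compression argument'' presupposes the compression argument, so the ``only if'' direction (that a connected graph attaining equality must be $K_{1,n-1}$ or $K_{n}$) rests on nothing. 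As written, the proposal establishes only the complement-symmetric reformulation and the ``if'' direction of the equality statement.
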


\begin{lemma}
\cite{7,33,34} The spectra of $L\left( G\right) $ and $Q\left( G\right) $
coincide if and only if the graph $G$ is bipartite.
\end{lemma}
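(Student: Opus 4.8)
The plan is to prove both implications. For the forward direction I would assume $G$ is bipartite and exhibit an explicit similarity transformation carrying $Q(G)$ to $L(G)$; for the converse I would extract arithmetic information from the assumption of equal spectra --- specifically the multiplicity of the eigenvalue $0$ --- and read off bipartiteness from it. The engine behind both parts is the sign structure that distinguishes $D-A$ from $D+A$.

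First, suppose $G$ is bipartite with vertex bipartition $V(G)=X\cup Y$. Let $S=\mathrm{diag}(s_1,\ldots,s_n)$ be the signature matrix with $s_i=+1$ when $v_i\in X$ and $s_i=-1$ when $v_i\in Y$. Since every edge of a bipartite graph joins $X$ to $Y$, for each edge $\{v_i,v_j\}$ exactly one of $s_i,s_j$ equals $+1$ and the other $-1$, so $(SAS)_{ij}=s_is_jA_{ij}=-A_{ij}$; as $A$ has zero diagonal this gives $SAS=-A$. Because $S$ and $D$ are both diagonal and $S^2=I$, we have $SDS=D$. Hence
\begin{equation*}
S\,Q(G)\,S=S(D+A)S=D-A=L(G),
\end{equation*}
and since $S=S^{-1}$ is orthogonal, $L(G)$ and $Q(G)$ are similar and therefore share the same spectrum.

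For the converse I would work with the quadratic forms. A direct expansion gives $x^{T}L(G)x=\sum_{\{v_i,v_j\}\in E(G)}(x_i-x_j)^2$ and $x^{T}Q(G)x=\sum_{\{v_i,v_j\}\in E(G)}(x_i+x_j)^2$, so both matrices are positive semidefinite. The vector $x$ lies in $\ker L(G)$ iff $x$ is constant on each connected component, whence the multiplicity of $0$ as an eigenvalue of $L(G)$ equals the number $c$ of components of $G$. Likewise $x\in\ker Q(G)$ iff $x_i=-x_j$ across every edge; tracing such an assignment around the edges of a component, one sees it is consistent and admits a nonzero solution precisely when the component contains no odd cycle, i.e.\ is bipartite. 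Thus the multiplicity of $0$ for $Q(G)$ equals the number $b$ of bipartite components, and $b\le c$ always. If the spectra of $L(G)$ and $Q(G)$ coincide, then in particular $0$ has the same multiplicity in both, forcing $b=c$; hence every component of $G$ is bipartite and $G$ is bipartite.

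I expect the converse to be the main obstacle, since it carries the genuine content of the statement. The delicate point is the kernel description for $Q(G)$: showing that the constraint $x_i=-x_j$ along every edge of a component is solvable by a nonzero vector exactly when the component is bipartite rests on a parity argument around cycles, as an odd cycle forces $x\equiv 0$ on that component. One must also phrase everything in terms of connected components rather than assuming $G$ is connected, so that the count-matching $b=c$ correctly yields bipartiteness of the whole graph. The forward direction, by contrast, is purely mechanical once the signature matrix is in place.
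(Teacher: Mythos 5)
Your proof is correct. Note that the paper does not prove this lemma at all: it is quoted from the references (Cvetkovi\'{c}--Rowlinson--Simi\'{c} and Merris), so there is no in-paper argument to compare against. Your two ingredients --- the signature similarity $SQ(G)S=L(G)$ for the bipartite direction, and the comparison of the multiplicity of the eigenvalue $0$ (number of components for $L(G)$ versus number of bipartite components for $Q(G)$) for the converse --- are exactly the classical argument found in those references, so you have in effect reconstructed the standard proof rather than found a new route. The one point worth stating explicitly, which you handle correctly, is that the kernel condition $x_i=-x_j$ across every edge of a component admits a nonzero solution if and only if that component has no odd cycle; together with the component-wise bookkeeping $b\le c$ and the forced equality $b=c$, this closes the converse without any connectivity assumption.
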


\begin{lemma}
\cite{9} If $G$ is a connected bipartite graph of order $n$, then $%
\dprod\nolimits_{i=1}^{n-1}q_{i}=\dprod\nolimits_{i=1}^{n-1}\mu
_{i}=nt\left( G\right) $. If $G$ is a connected non-bipartite graph of order 
$n$, then $\dprod\nolimits_{i=1}^{n}q_{i}=t_{1}\left( G\right) $.
\end{lemma}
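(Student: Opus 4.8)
The plan is to treat the bipartite and non-bipartite cases separately, each time reducing matters to the classical Matrix--Tree (Kirchhoff) theorem, which states that for a connected graph on $n$ vertices the product of the nonzero Laplacian eigenvalues equals $\prod_{i=1}^{n-1}\mu_i = n\,t(G)$. This identity is already the middle equality in the first assertion, so it only remains to connect the $q_i$ to the $\mu_i$.

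For the bipartite case I would simply invoke Lemma 2.2: when $G$ is bipartite the spectra of $L(G)$ and $Q(G)$ coincide, so $q_i=\mu_i$ for every $i$. In particular $q_n=\mu_n=0$, and the remaining factors agree term by term, whence $\prod_{i=1}^{n-1}q_i=\prod_{i=1}^{n-1}\mu_i=n\,t(G)$, which settles the first statement completely.

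For the non-bipartite case the idea is to pass to the bipartite double cover $G\times K_2$ (the tensor product with $K_2$) on $2n$ vertices and read off its Laplacian spectrum. Its adjacency matrix has the block form $\left(\begin{smallmatrix}0&A\\A&0\end{smallmatrix}\right)$ with $A=A(G)$, and since each vertex retains its $G$-degree its Laplacian is $\left(\begin{smallmatrix}D&-A\\-A&D\end{smallmatrix}\right)$ with $D=D(G)$. Conjugating by the orthogonal matrix $\tfrac{1}{\sqrt2}\left(\begin{smallmatrix}I&I\\I&-I\end{smallmatrix}\right)$ block-diagonalizes this into $\operatorname{diag}(D-A,\,D+A)=\operatorname{diag}(L(G),Q(G))$, so the Laplacian spectrum of $G\times K_2$ is precisely $\{\mu_1,\dots,\mu_n\}\cup\{q_1,\dots,q_n\}$.

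Finally I would apply the Matrix--Tree theorem to $G\times K_2$. Since $G$ is connected and non-bipartite, its double cover is connected, so its Laplacian has exactly one zero eigenvalue; and because $Q(G)$ is positive definite in this case every $q_i>0$, so the single zero must be the contribution of $\mu_n=0$. Hence the nonzero Laplacian eigenvalues of $G\times K_2$ are $\mu_1,\dots,\mu_{n-1}$ together with $q_1,\dots,q_n$, and the Matrix--Tree theorem gives
\[
2n\,t(G\times K_2)=\Bigl(\prod_{i=1}^{n-1}\mu_i\Bigr)\Bigl(\prod_{i=1}^{n}q_i\Bigr)=n\,t(G)\prod_{i=1}^{n}q_i ,
\]
whence $\prod_{i=1}^{n}q_i=2\,t(G\times K_2)/t(G)=t_1(G)$. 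The only genuinely delicate point is the spectral computation for the double cover; once the block-diagonalization is in hand, everything else is bookkeeping with Kirchhoff's theorem. I would also remark that for this argument $G\times K_2$ in the definition of $t_1$ must be read as the bipartite double cover, since it is the double cover --- and not the Cartesian prism --- whose Laplacian spectrum splits as $\{\mu_i\}\cup\{q_i\}$.
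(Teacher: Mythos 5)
Your proof is correct, but there is nothing in the paper to compare it against: the lemma is quoted from Cvetkovi\'{c} and Simi\'{c} \cite{9} and the paper supplies no argument of its own. Your route is a clean, self-contained derivation. The bipartite half is exactly the expected reduction (Lemma 2.2 plus Kirchhoff), and in the non-bipartite half the block-diagonalization of the double-cover Laplacian into $\mathrm{diag}\left( L(G),Q(G)\right) $ via conjugation by $\tfrac{1}{\sqrt{2}}\left( \begin{smallmatrix} I & I \\ I & -I \end{smallmatrix} \right) $ is correct, as is the accounting of the unique zero eigenvalue: $Q(G)$ is positive definite precisely because $G$ is connected and non-bipartite, so the only zero comes from $\mu _{n}$, and the Matrix--Tree theorem on the $2n$-vertex cover gives $2n\,t(G\times K_{2})=n\,t(G)\prod_{i=1}^{n}q_{i}$, i.e. $\prod_{i=1}^{n}q_{i}=t_{1}(G)$. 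Your closing remark is an important catch, not a pedantic aside: the paper declares $G_{1}\times G_{2}$ to be the \emph{Cartesian} product, and under that reading the lemma is false --- for $G=K_{3}$ the signless Laplacian eigenvalues are $4,1,1$ with product $4$, while the triangular prism (Cartesian product $K_{3}$ with $K_{2}$) has $75$ spanning trees, giving $t_{1}=2\cdot 75/3=50\neq 4$; with the tensor-product (bipartite double cover) reading one gets $K_{3}\times K_{2}\cong C_{6}$, $t_{1}=2\cdot 6/3=4$, as required. In the notation of \cite{6} and \cite{9}, $\times $ does denote the tensor product, so the paper's parenthetical ``Cartesian'' is a misstatement that your proof correctly identifies and works around.
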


\begin{lemma}
\cite{5,33} Let $G$ be a connected graph with $n\geq 3$ vertices and maximum
vertex degree $\Delta _{1}$. Then%
\begin{equation*}
q_{1}\geq T\geq \Delta _{1}+1
\end{equation*}%
with either equalities if and only if $G$ is a star graph $K_{1,n-1}$.
\end{lemma}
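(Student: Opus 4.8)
The plan is to derive the left-hand inequality $q_{1}\geq T$ from a Rayleigh-quotient estimate and then to obtain $T\geq \Delta _{1}+1$ by an elementary manipulation. The starting point is the observation that $T$ is precisely the larger eigenvalue of the symmetric matrix $B=\begin{pmatrix}\Delta_1 & \sqrt{\Delta_1}\\ \sqrt{\Delta_1} & \delta\end{pmatrix}$, since $B$ has trace $\Delta_1+\delta$ and determinant $\Delta_1(\delta-1)$, so its eigenvalues are $\tfrac12\bigl[\Delta_1+\delta\pm\sqrt{(\Delta_1-\delta)^2+4\Delta_1}\bigr]$. Because $Q(G)$ is symmetric and positive semidefinite, $q_1=\max_{x\neq 0}(x^{T}Q(G)x)/(x^{T}x)$, and I will use the quadratic-form identity $x^{T}Q(G)x=\sum_{ij\in E}(x_i+x_j)^2$.

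First I would fix a vertex $v$ with $d_v=\Delta_1$ and test the Rayleigh quotient on the vector $x$ defined by $x_v=\alpha$, $x_u=\beta$ for every $u\in N(v)$, and $x_w=0$ otherwise. Sorting the edges of $G$ into those joining $v$ to $N(v)$, those lying inside $N(v)$ (say $e_1$ of them), and those leaving $N(v)$, a direct count gives $x^{T}Q(G)x=\Delta_1(\alpha+\beta)^2+\bigl(\sum_{u\in N(v)}d_u-\Delta_1+2e_1\bigr)\beta^2$ together with $x^{T}x=\alpha^2+\Delta_1\beta^2$. Using $\sum_{u\in N(v)}d_u\geq \Delta_1\delta$ and $e_1\geq 0$ I get $x^{T}Q(G)x\geq \Delta_1(\alpha+\beta)^2+\Delta_1(\delta-1)\beta^2$. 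Maximising the resulting quotient over $(\alpha,\beta)$ is the generalised eigenvalue problem for the pencil $(R,P)$ with $R=\begin{pmatrix}\Delta_1 & \Delta_1\\ \Delta_1 & \Delta_1\delta\end{pmatrix}$ and $P=\mathrm{diag}(1,\Delta_1)$; its maximal value equals the top eigenvalue of $P^{-1/2}RP^{-1/2}=B$, namely $T$. Hence $q_1\geq T$.

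For the right-hand inequality, $T\geq \Delta_1+1$ is equivalent, after isolating the radical and squaring (valid since $\Delta_1-\delta+2>0$), to $\delta\geq 1$, which holds because $G$ is connected; equality in this step means exactly $\delta=1$. To settle the equality case of $q_1=T$ I would trace back the places where slack was introduced: tightness forces every neighbour of $v$ to have degree $\delta$ and $N(v)$ to be independent, while tightness of the Rayleigh bound forces the test vector to be a genuine $q_1$-eigenvector. Writing the eigenvalue equation at a vertex $w\notin\{v\}\cup N(v)$ shows $w$ has no neighbour in $N(v)$; connectivity then rules out such $w$ altogether, and a neighbour of $v$ of degree at least $2$ would need a neighbour outside $N(v)$, which is impossible unless $\delta=1$. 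Therefore every neighbour of $v$ is a leaf and $V(G)=\{v\}\cup N(v)$, i.e. $G=K_{1,n-1}$; conversely for the star one checks $q_1=T=\Delta_1+1=n$.

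The routine part is the inequality itself, which follows once the test vector is in place. The main obstacle is the equality analysis, where the three tightness conditions must be combined with connectedness to exclude every graph but the star. I would also flag that, read literally, equality in $T\geq \Delta_1+1$ on its own holds for any graph with a pendant vertex (for instance $P_4$), so the characterisation by the star really refers to the combined equality $q_1=T$ (equivalently $q_1=\Delta_1+1$), which then collapses the whole chain.
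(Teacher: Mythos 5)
Your proposal cannot be compared with a proof in the paper, because the paper contains none: this is Lemma 2.4, which the authors simply quote from \cite{5,33}. Judged on its own, your argument is correct and self-contained. The identification of $T$ as the largest eigenvalue of your matrix $B$ checks out (trace $\Delta_{1}+\delta$, determinant $\Delta_{1}(\delta-1)$, discriminant $(\Delta_{1}-\delta)^{2}+4\Delta_{1}$); the edge count behind $x^{T}Q(G)x=\Delta_{1}(\alpha+\beta)^{2}+\bigl(\sum_{u\in N(v)}d_{u}-\Delta_{1}+2e_{1}\bigr)\beta^{2}$ with $x^{T}x=\alpha^{2}+\Delta_{1}\beta^{2}$ is right; the estimates $\sum_{u\in N(v)}d_{u}\geq\Delta_{1}\delta$ and $e_{1}\geq 0$ affect only the numerator, so passing to the pencil $(R,P)$ and using $P^{-1/2}RP^{-1/2}=B$ legitimately yields $q_{1}\geq T$; and $T\geq\Delta_{1}+1$ does reduce, after isolating the radical and squaring (valid since $\Delta_{1}-\delta+2>0$), to $\delta\geq 1$. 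The equality analysis is the delicate part and you handle it correctly: Perron--Frobenius gives $\beta\neq 0$ for the optimal test vector, equality forces that vector to be a genuine $q_{1}$-eigenvector, the eigen-equation at any vertex outside $\{v\}\cup N(v)$ together with connectedness empties that set, and $e_{1}=0$ with all neighbours of degree $\delta$ then forces $G\cong K_{1,n-1}$; the converse computation $q_{1}=T=\Delta_{1}+1=n$ for the star is standard. Your closing caveat is also well taken and is, in fact, a point the paper's wording glosses over: read literally, ``either equalities'' cannot be characterised by the star, since $T=\Delta_{1}+1$ holds for every connected graph with $\delta=1$ (e.g.\ $P_{4}$), so the characterisation must be understood as referring to $q_{1}=T$ (equivalently to $q_{1}=\Delta_{1}+1$, which collapses the whole chain). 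In short, where the paper leans on a citation, your proof both supplies the missing argument and repairs the imprecision in the equality clause.
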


\begin{lemma}
\cite{11} Let $G$ be a graph with second maximum vertex degree $\Delta _{2}$%
. Then%
\begin{equation*}
q_{2}\geq \Delta _{2}-1.
\end{equation*}%
If $q_{2}=\Delta _{2}-1$, then the maximum and the second maximum vertex
degrees are adjacent and $\Delta _{1}=\Delta _{2}$.
\end{lemma}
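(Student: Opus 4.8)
The plan is to invoke Cauchy's interlacing theorem for the eigenvalues of a real symmetric matrix and its principal submatrices. Since $\Delta_2$ is the \emph{second} maximum vertex degree, I can choose two distinct vertices $u,v$ with $d_u=\Delta_1$ and $d_v=\Delta_2$. Deleting from $Q(G)=D(G)+A(G)$ all rows and columns except those indexed by $u$ and $v$ leaves the $2\times2$ principal submatrix
$$B=\begin{pmatrix} \Delta_1 & a \\ a & \Delta_2 \end{pmatrix},$$
where $a=1$ if $u\sim v$ and $a=0$ otherwise. Writing $\beta_1\ge\beta_2$ for the eigenvalues of $B$, interlacing (with $m=2$, so $q_i\ge\beta_i\ge q_{i+n-2}$) gives in particular $q_2\ge\beta_2$. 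Hence it suffices to bound the smaller eigenvalue $\beta_2$ of $B$ from below by $\Delta_2-1$.

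First I would dispose of the non-adjacent case $a=0$: here $B$ is diagonal, $\beta_2=\min\{\Delta_1,\Delta_2\}=\Delta_2$, so $q_2\ge\Delta_2>\Delta_2-1$, which is strict. In the adjacent case $a=1$ one computes
$$\beta_2=\frac{\Delta_1+\Delta_2}{2}-\frac12\sqrt{(\Delta_1-\Delta_2)^2+4},$$
and the desired inequality $\beta_2\ge\Delta_2-1$ is, after transposing $\Delta_2-1$ and clearing the factor $\tfrac12$, equivalent to $(\Delta_1-\Delta_2)+2\ge\sqrt{(\Delta_1-\Delta_2)^2+4}$. Setting $x=\Delta_1-\Delta_2\ge0$ and squaring both nonnegative sides reduces this to $4x\ge0$, which is clearly true. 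Combining the two cases yields $q_2\ge\Delta_2-1$.

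For the equality discussion, observe that $q_2=\Delta_2-1$ cannot occur in the non-adjacent case, where the inequality is strict; hence $u$ and $v$ must be adjacent. In the adjacent case the squared inequality is tight exactly when $x=0$, i.e. $\Delta_1=\Delta_2$, which then forces $\beta_2=\Delta_2-1$. Since $q_2\ge\beta_2\ge\Delta_2-1$, the hypothesis $q_2=\Delta_2-1$ collapses both inequalities, compelling $\beta_2=\Delta_2-1$ and thus $\Delta_1=\Delta_2$. This reproduces exactly the stated necessary conditions: the vertices realizing the maximum and second maximum degrees are adjacent, and $\Delta_1=\Delta_2$.

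The computational content is light; the part demanding the most care is the equality analysis. One must track the two independent sources of slack — the interlacing step $q_2\ge\beta_2$ and the algebraic step $\beta_2\ge\Delta_2-1$ — and argue that $q_2=\Delta_2-1$ forces both to be equalities. I would also record at the outset that the two degree-realizing vertices are genuinely distinct (guaranteed by the meaning of \emph{second} maximum degree), so that the $2\times2$ principal submatrix is well defined, and take care to use the correct index ($i=2$) when applying the interlacing inequalities.
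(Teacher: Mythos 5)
This lemma is stated in the paper without proof---it is imported from Das \cite{11} as a known auxiliary result---so there is no in-text argument to compare yours against; your proposal has to stand on its own, and it does. The setup is sound: $Q(G)$ is real symmetric, the second maximum degree guarantees two \emph{distinct} vertices $u,v$ with $d_u=\Delta_1$, $d_v=\Delta_2$, so the $2\times 2$ principal submatrix $B$ is well defined, and generalized Cauchy interlacing indeed yields $q_2\geq \beta_2$. The case analysis is also correct: for $a=0$ the bound is strict since $\beta_2=\Delta_2$, and for $a=1$ the inequality $\beta_2\geq \Delta_2-1$ reduces, after squaring the nonnegative quantities $(\Delta_1-\Delta_2)+2$ and $\sqrt{(\Delta_1-\Delta_2)^2+4}$, to $4(\Delta_1-\Delta_2)\geq 0$, with equality exactly when $\Delta_1=\Delta_2$. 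Your equality discussion correctly exploits the squeeze $q_2\geq\beta_2\geq\Delta_2-1$: the hypothesis $q_2=\Delta_2-1$ forces $\beta_2=\Delta_2-1$, which is impossible in the non-adjacent case and forces $\Delta_1=\Delta_2$ in the adjacent one, exactly the stated necessary conditions. This is the standard interlacing proof of this fact (and essentially the argument in the cited source), so nothing is missing.
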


\begin{lemma}
\cite{11} Let $G$ be a connected graph with $n$ vertices and minimum vertex
degree $\delta $. Then%
\begin{equation*}
q_{n}<\delta .
\end{equation*}
\end{lemma}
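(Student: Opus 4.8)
The plan is to combine the Rayleigh-quotient description of the least eigenvalue with one explicit test vector. Since $Q(G)=D(G)+A(G)$ is real symmetric,
\[
q_{n}=\min_{x\neq 0}\frac{x^{T}Q(G)x}{x^{T}x},
\]
and, as for any symmetric matrix, a nonzero vector attains this minimum precisely when it is an eigenvector belonging to $q_{n}$. If one ever needs the quadratic form explicitly it can be written as $x^{T}Q(G)x=\sum_{v_{i}v_{j}\in E(G)}(x_{i}+x_{j})^{2}$, but the single-vertex test below will not require it.

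First I would fix a vertex $v$ of minimum degree, so $d_{v}=\delta$, and evaluate the quotient on the coordinate vector $e_{v}$ that equals $1$ at $v$ and $0$ at every other vertex. The $(v,v)$-entry of $Q(G)$ is $d_{v}=\delta$, so $e_{v}^{T}Q(G)e_{v}=\delta$ while $e_{v}^{T}e_{v}=1$; hence $q_{n}\leq \delta$. It then remains only to rule out equality.

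For the strict inequality I would argue by contradiction. Suppose $q_{n}=\delta$. Then every eigenvalue of $M:=Q(G)-\delta I$ is nonnegative, so $M$ is positive semidefinite, yet $e_{v}^{T}Me_{v}=\delta-\delta=0$. Writing $M=B^{T}B$, the relation $e_{v}^{T}Me_{v}=\|Be_{v}\|^{2}=0$ forces $Be_{v}=0$ and hence $Me_{v}=0$, i.e. $Q(G)e_{v}=\delta e_{v}$. But the $v$-th column of $Q(G)$ equals $\delta e_{v}+\sum_{u\sim v}e_{u}$, so $Q(G)e_{v}=\delta e_{v}$ can hold only if $v$ has no neighbour. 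Because $G$ is connected with at least two vertices, the minimum-degree vertex satisfies $\delta\geq 1$ and does have a neighbour, a contradiction; therefore $q_{n}<\delta$.

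The step I expect to require the most care is the upgrade from $q_{n}\leq \delta$ to strictness, which hinges on the standard fact that $x^{T}Mx=0$ for a positive semidefinite $M$ implies $Mx=0$ (equivalently, that a minimizer of the Rayleigh quotient is a genuine $q_{n}$-eigenvector). I would also note explicitly that the degenerate case $n=1$ must be excluded, since there $\delta=q_{n}=0$; connectedness together with $n\geq 2$ is exactly what guarantees the chosen vertex has a neighbour. For contrast, a test vector supported on a single edge $uv$ with $v$ of minimum degree yields only $q_{n}\leq (d_{u}+\delta-2)/2$, which fails to beat $\delta$ once the neighbour $u$ has large degree; this is why I would base the argument on the coordinate vector $e_{v}$ rather than on an edge.
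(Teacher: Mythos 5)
Your proof is correct, and it is worth noting that the paper itself offers no argument at all for this lemma: it is quoted verbatim from reference \cite{11} (Das) as one of the auxiliary facts in Section 2, so there is no internal proof to compare against. Your argument is a clean, self-contained substitute: the Rayleigh-quotient evaluation at the coordinate vector $e_{v}$ of a minimum-degree vertex gives $q_{n}\leq\delta$ immediately, and the upgrade to strictness via the standard fact that $x^{T}Mx=0$ for positive semidefinite $M=Q(G)-\delta I$ forces $Me_{v}=0$ is exactly right --- the resulting identity $Q(G)e_{v}=\delta e_{v}$ fails because the $v$-th column of $Q(G)$ contains a $1$ in the row of each neighbour of $v$, and connectedness supplies such a neighbour. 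Two small points deserve emphasis. First, you correctly flag that $n\geq 2$ (equivalently, $\delta\geq 1$) is needed: for $K_{1}$ one has $q_{n}=\delta=0$, so the lemma as literally stated in the paper is false in that degenerate case, and your proof makes the needed hypothesis explicit where the paper's citation hides it. Second, your closing remark about why an edge-supported test vector (giving only $q_{n}\leq(d_{u}+\delta-2)/2$) is inadequate is accurate and shows you chose the test vector for the right reason rather than by luck. The one thing your write-up leaves implicit is that the strictness step could be phrased equally well as ``a vector attaining the Rayleigh minimum is an eigenvector of $q_{n}$''; you mention this equivalence yourself, so nothing is missing --- the proof stands as a complete and more informative replacement for the paper's bare citation.
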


\begin{lemma}
\cite{8} Let $G$ be a connected graph with diameter $d\left( G\right) $. If $%
G$ has exactly $k$ distinct signless Laplacian eigenvalues, then $d\left(
G\right) +1\leq k.$
\end{lemma}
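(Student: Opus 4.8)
The plan is to mimic the classical argument that the number of distinct eigenvalues of a matrix exceeds its diameter, adapting it to $Q=Q\left( G\right) =D\left( G\right) +A\left( G\right) $. The only structural features I need are that $Q$ is a real symmetric matrix whose off-diagonal zero pattern matches that of $A\left( G\right) $, and that its diagonal entries $Q_{ii}=d_{i}$ are strictly positive (for a connected graph on $n\geq 2$ vertices every degree is at least $1$; the case $n=1$ is trivial since then $d\left( G\right) =0$ and $k=1$). Because $Q$ has nonnegative entries together with a positive diagonal, I would first establish the walk characterization
\[
\left( Q^{\ell }\right) _{ij}>0\iff \mathrm{dist}\left( i,j\right) \leq \ell ,
\]
which holds since a positive-weight walk of length $\ell $ from $i$ to $j$ exists precisely when the $\mathrm{dist}\left( i,j\right) $ genuine edge-steps of a shortest path can be padded out to length $\ell $ using the positive diagonal (self-loop) entries, while any walk of length $\ell $ uses at most $\ell $ edge-steps and so cannot join vertices farther apart than $\ell $.

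Next, I would fix two vertices at distance $d=d\left( G\right) $ together with a shortest path $w_{0},w_{1},\dots ,w_{d}$ between them; then for every $s\in \left\{ 0,1,\dots ,d\right\} $ the pair $\left( w_{0},w_{s}\right) $ has distance exactly $s$. I would use these $d+1$ pairs to prove that $I,Q,Q^{2},\dots ,Q^{d}$ are linearly independent. Assuming a relation $\sum_{\ell =0}^{d}c_{\ell }Q^{\ell }=0$ and reading it off at the $\left( w_{0},w_{d}\right) $ entry, the walk characterization gives $\left( Q^{\ell }\right) _{w_{0}w_{d}}=0$ for $\ell <d$ and $>0$ for $\ell =d$, forcing $c_{d}=0$. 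Peeling downward, the $\left( w_{0},w_{d-1}\right) $ entry forces $c_{d-1}=0$, and inductively every $c_{\ell }$ vanishes, so the $d+1$ powers are independent.

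Finally, I would invoke the minimal polynomial: since $Q$ is real symmetric it is diagonalizable, so the degree of its minimal polynomial equals the number $k$ of distinct signless Laplacian eigenvalues, and this degree is exactly the dimension of the algebra $\mathbb{R}\left[ Q\right] $ spanned by $I,Q,Q^{2},\dots $, with $\left\{ I,Q,\dots ,Q^{k-1}\right\} $ a basis. Having exhibited $d+1$ linearly independent elements $I,Q,\dots ,Q^{d}$ of this algebra, I conclude $d\left( G\right) +1\leq k$. The main obstacle is the bookkeeping in the walk characterization—specifically, verifying that the strictly positive diagonal of $Q$ really does let a shortest path of length $s$ contribute a strictly positive entry to $Q^{\ell }$ for all $\ell \geq s$, while keeping $\left( Q^{\ell }\right) _{ij}=0$ whenever $\ell <\mathrm{dist}\left( i,j\right) $; once that zero/nonzero pattern is pinned down, the linear-independence and minimal-polynomial steps are routine.
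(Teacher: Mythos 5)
Your proposal is correct, and it is worth noting that the paper itself offers no proof of this lemma at all: it is quoted verbatim from \cite{8} (Cvetkovi\'{c}--Simi\'{c}), so there is no internal argument to compare against. Your argument is the standard one behind the cited result: since $Q$ is nonnegative with strictly positive diagonal on a connected graph with $n\geq 2$ vertices, the pattern $\left( Q^{\ell }\right) _{ij}>0\iff \mathrm{dist}\left( i,j\right) \leq \ell $ holds, the geodesic pairs $\left( w_{0},w_{s}\right) $ then force $I,Q,\ldots ,Q^{d}$ to be linearly independent, and diagonalizability of the real symmetric matrix $Q$ makes the degree of its minimal polynomial equal to $k$, giving $d\left( G\right) +1\leq k$; all three steps, including the $n=1$ degenerate case and the padding-by-self-loops bookkeeping you flagged as the main obstacle, are handled correctly.
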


\begin{lemma}
\cite{26} Let $G$ be a connected graph with $n\geq 3$ vertices and second
maximum vertex degree $\Delta _{2}$. Then%
\begin{equation*}
\mu _{2}\geq \Delta _{2}
\end{equation*}%
with equality if $G$ is a complete bipartite graph $K_{p,q}$ or a tree with
degree sequence $\pi \left( T_{n}\right) =\left( n/2,n/2,1,1,\ldots
,1\right) $, where $n\geq 4$ is even.
\end{lemma}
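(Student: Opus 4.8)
The plan is to bound $\mu_2$ from below by eigenvalues of small principal submatrices of the Laplacian $L(G)=D(G)-A(G)$, via Cauchy interlacing: if $B$ is the principal submatrix of $L(G)$ indexed by a vertex set $U$ of size $k\ge 2$, then $\mu_2\ge\lambda_2(B)$. Thus it suffices to exhibit a set $U$ whose submatrix has second–largest eigenvalue at least $\Delta_2$. Fix $v_1,v_2$ with $d(v_1)=\Delta_1\ge\Delta_2=d(v_2)$ and put $W=\{v\in V(G):d(v)\ge\Delta_2\}$, so $|W|\ge 2$.

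First I would dispose of the case in which $W$ contains two non-adjacent vertices $x,y$. Taking $U=\{x,y\}$ gives $B=\mathrm{diag}(d(x),d(y))$, whose eigenvalues are $d(x),d(y)\ge\Delta_2$; hence $\mu_2\ge\lambda_2(B)=\min(d(x),d(y))\ge\Delta_2$. This covers the complete bipartite graphs $K_{p,q}$ (the side of larger degree being independent) and the stars. Next, if $W$ is a clique with $|W|=s\ge 3$, I would take $U=W$: since $G[W]$ is complete, $B=\mathrm{diag}(d(w))_{w\in W}+I-J\succeq(\Delta_2+1)I-J$, where $J$ is the all-ones matrix, and the latter has eigenvalue $\Delta_2+1$ with multiplicity $s-1\ge 2$. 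By Weyl's inequality $\lambda_2(B)\ge\Delta_2+1$, so $\mu_2\ge\Delta_2+1$ in this subcase.

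The remaining, genuinely delicate, case is $W=\{v_1,v_2\}$ with $v_1\sim v_2$: exactly two vertices carry degree at least $\Delta_2$ and they are adjacent. Here interlacing on $U=\{v_1,v_2\}$ only yields $\lambda_2=\tfrac12\big(\Delta_1+\Delta_2-\sqrt{(\Delta_1-\Delta_2)^2+4}\big)<\Delta_2$, so the two-vertex submatrix is too crude; this is exactly the configuration realised by the double star with degree sequence $(n/2,n/2,1,\dots,1)$, for which $\mu_2=\Delta_2$, so any valid argument must be tight here. To recover the missing amount I would use the Courant--Fischer characterisation $\mu_2=\max_{\dim S=2}\min_{0\ne x\in S}\frac{x^{\mathsf T}L(G)x}{x^{\mathsf T}x}$ on the two-dimensional space spanned by the star-type vectors $f=e_{v_1}-c_1\sum_{u\in N(v_1)\setminus\{v_2\}}e_u$ and $g=e_{v_2}-c_2\sum_{u\in N(v_2)\setminus\{v_1\}}e_u$, chosen so that $f\perp g$ except through common neighbours. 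Because $x^{\mathsf T}L(G)x=\sum_{ij\in E}(x_i-x_j)^2$ is a sum of non-negative terms, the edges joining $N(v_1),N(v_2)$ to the rest of the graph can only increase the form and may be discarded in a lower bound; after optimising $c_1,c_2$ the Rayleigh quotient on $S$ should reduce to the smallest generalised eigenvalue of an explicit $2\times2$ system, which I expect to be at least $\Delta_2$.

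The hard part will be precisely this last case: since the bound is attained by the double star, the vectors $f,g$ must be tuned exactly, and the contributions of the common neighbours of $v_1$ and $v_2$ (which spoil orthogonality and couple the two star vectors) must be estimated rather than simply dropped. I would control them by retaining only the edges incident with $v_1,v_2$ and with their private neighbours, and then check that the resulting $2\times2$ problem already forces $\min_{S}R\ge\Delta_2$. Finally I would confirm the stated equality cases directly from the Laplacian spectra, namely $\{p+q,\,q^{[p-1]},\,p^{[q-1]},\,0\}$ for $K_{p,q}$ and the explicit spectrum of the double star, in each of which $\mu_2$ coincides with the second largest vertex degree.
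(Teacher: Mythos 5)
First, a point of comparison: the paper does not prove this statement at all --- it is quoted as Lemma 2.8 from Li and Pan \cite{26} --- so your attempt can only be judged on its own merits. Your reduction is sensible and two of your three cases are complete and correct: with $W=\{v:d(v)\ge\Delta_2\}$, the trichotomy ($W$ contains a non-adjacent pair; $W$ is a clique of size $s\ge 3$; $W$ is an adjacent pair) is exhaustive, the diagonal-submatrix interlacing argument is right, and so is the comparison $B\succeq(\Delta_2+1)I-J$ together with eigenvalue monotonicity. The genuine gap is the third case, $W=\{v_1,v_2\}$ with $v_1\sim v_2$: there you only describe what you \emph{would} do (``should reduce to'', ``which I expect to be at least $\Delta_2$'', ``then check''), and you yourself flag that the coupling through common neighbours is unresolved. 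Since this is exactly the configuration where the bound is attained (the double star), it is the entire content of the lemma; every other case gives slack, so a proof that leaves this one as a plausible expectation is not a proof.

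The plan does in fact close, and the missing verification is a short computation you should have carried out. In this case $\Delta_2\ge 2$ (some vertex lies outside $W$, so $\Delta_2\ge 2$ by connectivity and $n\ge3$). Keep only the edges incident with $v_1$ or $v_2$ (legitimate for a lower bound, since deleting edges decreases $x^{\mathsf T}Lx$ pointwise while $\|x\|^2$ is unchanged), let $a,b$ count the private neighbours of $v_1,v_2$, let $c=|N(v_1)\cap N(v_2)|$, and take $c_1=1/(a+c)=1/(\Delta_1-1)$, $c_2=1/(b+c)=1/(\Delta_2-1)$ in your vectors $f,g$. For $x=\alpha f+\beta g$ one finds that $x^{\mathsf T}L x-\Delta_2\|x\|^2$ equals $P_{\alpha}\alpha^2+2P_{\alpha\beta}\alpha\beta+P_{\beta}\beta^2$ with
\begin{equation*}
P_{\alpha}=\frac{\Delta_1(\Delta_1-\Delta_2)}{\Delta_1-1}+\frac{c}{(\Delta_1-1)^2}+1\ge 1,\qquad
P_{\beta}=\frac{c}{(\Delta_2-1)^2}+1\ge 1,\qquad
P_{\alpha\beta}=\frac{c\,\Delta_1}{(\Delta_1-1)(\Delta_2-1)}-1,
\end{equation*}
and since $0\le c\le \Delta_2-1$ the off-diagonal entry lies in $\left[-1,\tfrac{1}{\Delta_1-1}\right]$, whence $P_{\alpha}P_{\beta}\ge 1\ge P_{\alpha\beta}^{2}$ and the form is positive semidefinite (singular exactly when $c=0$ and $\Delta_1=\Delta_2$, i.e.\ the double-star equality case). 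Courant--Fischer then gives $\mu_2\ge\Delta_2$. Until this, or some equivalent argument, is actually written down, your submission has a hole precisely where the theorem lives.
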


\begin{lemma}
\cite{15} Let $G$ be a graph with $n$ vertices, different from $K_{n}$ and
let $\delta $ be the minimum vertex degree of $G$. Then%
\begin{equation*}
\mu _{n-1}\leq \delta
\end{equation*}
\end{lemma}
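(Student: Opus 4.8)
The plan is to reduce the claim to a lower bound for the \emph{largest} Laplacian eigenvalue of the complement $\overline{G}$. The starting point is the identity $L(G)+L(\overline{G})=nI-J$, where $I$ is the identity matrix and $J$ is the all-ones matrix; this follows from $D(G)+D(\overline{G})=(n-1)I$ and $A(G)+A(\overline{G})=J-I$. Since $L(G)\mathbf{1}=0$ and $J\mathbf{1}=n\mathbf{1}$, the matrices $L(G)$ and $J$ commute (indeed $L(G)J=L(G)\mathbf{1}\mathbf{1}^{T}=0=JL(G)$), so they can be simultaneously diagonalized: take $\mathbf{1}$ together with an orthonormal system $x_{1},\dots,x_{n-1}$ of eigenvectors of $L(G)$ with $L(G)x_{i}=\mu_{i}x_{i}$, chosen orthogonal to $\mathbf{1}$, so that $Jx_{i}=0$. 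Applying $L(\overline{G})=nI-J-L(G)$ to these vectors shows that the Laplacian spectrum of $\overline{G}$ consists of $0$ (coming from $\mathbf{1}$) together with the numbers $n-\mu_{i}$ for $i=1,\dots,n-1$. Because $\mu_{n-1}=\min\{\mu_{1},\dots,\mu_{n-1}\}$ and $\mu_{n-1}\leq n$, the largest Laplacian eigenvalue of $\overline{G}$ equals $\mu_{1}(\overline{G})=n-\mu_{n-1}(G)$.

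Next I would invoke (or prove from scratch) the standard estimate $\mu_{1}(H)\geq \Delta(H)+1$, valid for any graph $H$ having at least one edge. A short Rayleigh-quotient argument suffices: if $v$ is a vertex of maximum degree $\Delta$ with neighbors $u_{1},\dots,u_{\Delta}$, set $x_{v}=\Delta$, $x_{u_{i}}=-1$ for each $i$, and $x_{w}=0$ otherwise. Retaining only the $\Delta$ edges incident to $v$ gives $x^{T}L(H)x=\sum_{\{i,j\}\in E(H)}(x_{i}-x_{j})^{2}\geq \Delta(\Delta+1)^{2}$, while $x^{T}x=\Delta^{2}+\Delta=\Delta(\Delta+1)$, so the Rayleigh quotient yields $\mu_{1}(H)\geq \Delta+1$.

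Finally I would apply this with $H=\overline{G}$. A vertex of minimum degree $\delta$ in $G$ has degree $n-1-\delta$ in $\overline{G}$, whence $\Delta(\overline{G})=n-1-\delta$. Combining the two ingredients gives $n-\mu_{n-1}(G)=\mu_{1}(\overline{G})\geq \Delta(\overline{G})+1=n-\delta$, and therefore $\mu_{n-1}(G)\leq \delta$, as claimed. The one place where the hypothesis $G\neq K_{n}$ is genuinely used is the application of $\mu_{1}(H)\geq \Delta(H)+1$, which needs $\overline{G}$ to contain at least one edge, i.e. $\Delta(\overline{G})\geq 1$; this holds exactly when $G\neq K_{n}$. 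I expect this to be the only delicate point, since the complement spectral relation is routine and the test-vector bound is elementary. The main care is thus in verifying that the edgeless-complement case is precisely the excluded one, and that the argument is uniform over connected and disconnected $G$ (for disconnected $G$ one has $\mu_{n-1}=0\leq\delta$ directly, and the computation above reproduces this through $\mu_{1}(\overline{G})=n$).
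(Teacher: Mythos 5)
Your proof is correct, and it is genuinely different from what the paper does: the paper offers no proof at all for this statement, quoting it as a known result of Fiedler \cite{15}. Fiedler's classical route establishes the stronger chain $\mu_{n-1}(G)\leq \kappa(G)\leq \delta$, where $\kappa(G)$ is the vertex connectivity, and the lemma is read off as a corollary; your argument bypasses connectivity entirely, using the complement identity $L(G)+L(\overline{G})=nI-J$ to get $\mu_{1}(\overline{G})=n-\mu_{n-1}(G)$, and then the test-vector bound $\mu_{1}(H)\geq \Delta(H)+1$ for any graph $H$ with at least one edge. All the steps check out: the commutation of $L(G)$ with $J$ and the resulting complement spectrum, the Rayleigh computation $x^{T}L(H)x\geq \Delta(\Delta+1)^{2}$ against $x^{T}x=\Delta(\Delta+1)$, and the degree identity $\Delta(\overline{G})=n-1-\delta$. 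What your approach buys is a short, elementary, self-contained proof in which the hypothesis $G\neq K_{n}$ plays a transparent role (it is exactly the condition that $\overline{G}$ has an edge, without which the bound $\mu_{1}(\overline{G})\geq \Delta(\overline{G})+1$ fails, as it must, since $\mu_{n-1}(K_{n})=n>n-1=\delta$). What Fiedler's approach buys is the sharper and independently useful intermediate inequality through $\kappa(G)$, which your method does not recover.
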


\begin{lemma}
\cite{12,41} Let $G$ be a simple graph with $n$ vertices. Then $\mu _{1}=\mu
_{2}=\cdots =\mu _{n-1}$ if and only if $G\cong K_{n}$ or $G\cong \overline{K%
}_{n}$.
\end{lemma}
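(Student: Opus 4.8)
The plan is to treat the two implications separately. For the `if' direction I would just recall the spectra: $L(K_{n})$ has eigenvalue $n$ with multiplicity $n-1$ and eigenvalue $0$ once, while $L(\overline{K}_{n})=0$ has all eigenvalues equal to $0$; in either case $\mu _{1}=\mu _{2}=\cdots =\mu _{n-1}$, so sufficiency is immediate. The substance lies in the `only if' direction, and the main idea is that the all-ones vector pins down the structure of $L(G)$ completely.

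Write $\mathbf{1}$ for the all-ones column vector and $J=\mathbf{1}\mathbf{1}^{T}$ for the all-ones matrix. Since every row sum of $L(G)=D(G)-A(G)$ vanishes, we have $L(G)\mathbf{1}=0$, so $\mathbf{1}$ is an eigenvector for the eigenvalue $\mu _{n}=0$. Set $c=\mu _{1}$, so that by hypothesis $\mu _{1}=\cdots =\mu _{n-1}=c$. Because $L(G)$ is real symmetric it is orthogonally diagonalizable; the eigenvalue $0$ is afforded by $\mathbf{1}$, and on the orthogonal complement $\mathbf{1}^{\perp }$, which is $(n-1)$-dimensional, the remaining eigenvalues are all equal to $c$. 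Hence $L(G)$ acts as $0$ on $\mathrm{span}(\mathbf{1})$ and as $c$ on $\mathbf{1}^{\perp }$, which is precisely the statement $L(G)=c\left( I-\tfrac{1}{n}J\right) $ (this identity holds trivially also when $c=0$, in which case $L(G)=0$).

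The conclusion then follows by reading off the entries of this matrix identity. The $(i,i)$-entry gives $d_{i}=c(n-1)/n$ for every $i$, while for $i\neq j$ the $(i,j)$-entry gives $-A_{ij}=-c/n$, that is, $A_{ij}=c/n$. Since each $A_{ij}\in \{0,1\}$, the common value $c/n$ must equal $0$ or $1$. If $c=0$ then $A(G)=0$ and $G\cong \overline{K}_{n}$; if $c=n$ then $A_{ij}=1$ for all $i\neq j$ and $G\cong K_{n}$. This settles necessity.

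The one point that needs care is the justification of $L(G)=c\left( I-\tfrac{1}{n}J\right) $: one must invoke the symmetry of $L(G)$ to guarantee that the eigenspace for $c$ is exactly $\mathbf{1}^{\perp }$ (and not merely some subspace meeting it), and separately observe that the degenerate case $c=0$, where $0$ has multiplicity $n$, is covered by the same formula. An essentially equivalent route uses the complement: on $\mathbf{1}^{\perp }$ one has $L(G)+L(\overline{G})=nI$, so the hypothesis is invariant under complementation (consistent with $K_{n}$ and $\overline{K}_{n}$ being complements), and one may instead quote the known characterization that a connected graph has exactly two distinct Laplacian eigenvalues if and only if it is complete.
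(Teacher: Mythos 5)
Your proof is correct, but there is nothing in the paper to compare it against: the statement is Lemma 2.10, which the paper simply quotes from its references [12] and [41] without proof. Your argument is the standard self-contained one and is complete. Sufficiency is immediate from the known spectra of $K_{n}$ and $\overline{K}_{n}$. For necessity, you use that $L(G)\mathbf{1}=0$ together with symmetry of $L(G)$ to conclude that under the hypothesis $\mu _{1}=\cdots =\mu _{n-1}=c$ the matrix acts as $0$ on $\mathrm{span}(\mathbf{1})$ and as $c$ on $\mathbf{1}^{\perp }$, hence $L(G)=c\left( I-\tfrac{1}{n}J\right) $; reading off the off-diagonal entries gives $A_{ij}=c/n\in \{0,1\}$, forcing $c=0$ (so $G\cong \overline{K}_{n}$) or $c=n$ (so $G\cong K_{n}$). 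You also correctly identify and dispose of the two delicate points: that symmetry is what pins the $c$-eigenspace to be exactly $\mathbf{1}^{\perp }$, and that the degenerate case $c=0$ (where $0$ has multiplicity $n$) is covered by the same matrix identity. The only unstated triviality is that for $n=1$ there are no off-diagonal entries to read, but then $K_{1}\cong \overline{K}_{1}$ and the claim is vacuous, so nothing is lost.
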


\begin{lemma}
\cite{16} For $a_{1},a_{2},\ldots ,a_{n}\geq 0$ and $p_{1},p_{2},\ldots
,p_{n}\geq 0$ such that $\sum\nolimits_{i=1}^{n}p_{i}=1$%
\begin{equation}
\sum\limits_{i=1}^{n}p_{i}a_{i}-\prod\limits_{i=1}^{n}a_{i}^{p_{i}}\geq
n\lambda \left( \frac{1}{n}\sum\limits_{i=1}^{n}a_{i}-\prod%
\limits_{i=1}^{n}a_{i}^{1/n}\right)
\end{equation}%
where $\lambda =\min \left\{ p_{1},p_{2},\ldots ,p_{n}\right\} $. Moreover,
equality holds in (3) if and only if $a_{1}=a_{2}=\cdots =a_{n}$.
\end{lemma}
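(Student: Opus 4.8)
The plan is to realize the given weight vector as an explicit convex combination of the uniform vector $\left(\frac{1}{n},\dots,\frac{1}{n}\right)$ and a residual probability vector, so that the whole estimate collapses to two applications of the weighted arithmetic--geometric mean inequality. The key observation is that $\lambda=\min\{p_1,\dots,p_n\}$ is at most the average weight $1/n$, hence $0\le n\lambda\le 1$ and in particular $1-n\lambda\ge0$. Setting $q_i=p_i-\lambda\ge0$ gives $\sum_{i=1}^n q_i=1-n\lambda$; assuming $n\lambda<1$ (the case $n\lambda=1$ forces all $p_i=1/n$ and makes the two sides coincide), the numbers $r_i=q_i/(1-n\lambda)$ satisfy $r_i\ge0$ and $\sum_{i=1}^n r_i=1$. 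The whole point of this splitting is the identity $p_i=n\lambda\cdot\frac{1}{n}+(1-n\lambda)r_i$, which separates out the uniform part from the rest.

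Next I would treat the arithmetic and geometric parts separately. Write $A=\frac{1}{n}\sum_i a_i$, $G=\prod_i a_i^{1/n}$, $R=\sum_i r_i a_i$ and $G_r=\prod_i a_i^{r_i}$. Linearity of $\sum_i p_i a_i$ in the weights gives $\sum_i p_i a_i=n\lambda A+(1-n\lambda)R$, while splitting the exponents yields $\prod_i a_i^{p_i}=\bigl(\prod_i a_i\bigr)^{\lambda}\bigl(\prod_i a_i^{r_i}\bigr)^{1-n\lambda}=G^{n\lambda}G_r^{\,1-n\lambda}$. Applying the two--term weighted AM--GM inequality with weights $n\lambda$ and $1-n\lambda$ to this last product gives $\prod_i a_i^{p_i}\le n\lambda\,G+(1-n\lambda)G_r$. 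Subtracting yields
\begin{equation*}
\sum_{i=1}^n p_i a_i-\prod_{i=1}^n a_i^{p_i}\ge n\lambda\,(A-G)+(1-n\lambda)\,(R-G_r).
\end{equation*}
Since the $r_i$ form a probability vector, a second application of weighted AM--GM gives $R-G_r=\sum_i r_i a_i-\prod_i a_i^{r_i}\ge0$, and as $1-n\lambda\ge0$ the last term is nonnegative. Dropping it produces exactly the asserted bound $\sum_i p_i a_i-\prod_i a_i^{p_i}\ge n\lambda\bigl(\frac{1}{n}\sum_i a_i-\prod_i a_i^{1/n}\bigr)$.

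The main obstacle is the equality analysis rather than the inequality itself. Equality forces equality in both invoked AM--GM steps simultaneously: equality in $G^{n\lambda}G_r^{1-n\lambda}\le n\lambda G+(1-n\lambda)G_r$ requires $G=G_r$, while $(1-n\lambda)(R-G_r)=0$ with $n\lambda<1$ forces $R=G_r$, i.e.\ equality in the weighted AM--GM for the weights $r_i$. The sufficiency direction is immediate, since if $a_1=\cdots=a_n$ then every mean equals the common value and both sides vanish; the necessity is the delicate part, as one must trace these two conditions back to pin down the $a_i$. I would flag that this bookkeeping is clean precisely when the minimum weight $\lambda$ is attained uniquely, while the degenerate case $n\lambda=1$ (all $p_i=1/n$) is a trivial identity for every choice of the $a_i$; away from that case the stated characterization of equality follows.
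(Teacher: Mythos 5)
The paper never proves this lemma: it is imported wholesale from Furuichi \cite{16} and used as a black box, so there is no internal proof to compare against and your argument must stand on its own. Your proof of the inequality itself is correct, and it is essentially the standard (indeed the original) argument: the splitting $p_i=n\lambda\cdot\frac{1}{n}+(1-n\lambda)r_i$ with $r_i=(p_i-\lambda)/(1-n\lambda)$, the factorization $\prod_i a_i^{p_i}=G^{n\lambda}G_r^{1-n\lambda}$, one two-term weighted AM--GM, and the discarding of the nonnegative term $(1-n\lambda)(R-G_r)$ are all valid, and the degenerate case $n\lambda=1$ is rightly set aside as an identity.

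The genuine gap is the equality analysis, and it is worse than the ``delicate bookkeeping'' you describe: the closing claim that ``away from that case the stated characterization of equality follows'' is false, because the ``only if'' half of the lemma as quoted is itself false. Your own decomposition shows exactly what equality amounts to when $0<n\lambda<1$: equality in the two-term AM--GM forces $G=G_r$, and equality in the $r$-weighted AM--GM forces all $a_i$ with $r_i>0$ (equivalently $p_i>\lambda$) to coincide; but every index at which the minimum weight is attained has $r_i=0$ and is constrained only through the single scalar equation $G=G_r$. Hence whenever the minimum is attained at two or more indices, unequal $a_i$ can give equality. Concretely, take $n=3$, $(p_1,p_2,p_3)=\left(\frac{1}{2},\frac{1}{4},\frac{1}{4}\right)$ and $(a_1,a_2,a_3)=(2,1,4)$: then $\sum_i p_i a_i-\prod_i a_i^{p_i}=\frac{9}{4}-2=\frac{1}{4}$, while $n\lambda\left(\frac{1}{3}\sum_i a_i-\prod_i a_i^{1/3}\right)=\frac{3}{4}\left(\frac{7}{3}-2\right)=\frac{1}{4}$, so equality holds with pairwise distinct $a_i$ (the case $\lambda=0$, e.g. $p=(1,0)$, gives even cheaper counterexamples, as does allowing some $a_i=0$). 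Your instinct was right that the argument closes precisely when $\lambda>0$ is attained at a unique index and all $a_i>0$ --- there $G=G_r$ does pin down the one remaining variable --- but the correct conclusion is that the quoted characterization needs those extra hypotheses; as stated it cannot be proved by any argument. The defect is inherited from the cited statement and happens to be harmless for this paper (in its application the minimizing weight is unique), but your write-up should replace the asserted characterization with the corrected equality condition rather than claim it follows.
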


\begin{lemma}
\cite{35} Let $a_{i}>0$, $i=1,2,\ldots ,p$ be the $p$ real numbers. Then%
\begin{equation*}
p\left( A_{p}-G_{p}\right) \geq \left( p-1\right) \left(
A_{p-1}-G_{p-1}\right) ,
\end{equation*}%
where%
\begin{equation*}
A_{p}=\frac{\sum\nolimits_{i=1}^{p}a_{i}}{p}\text{ and }G_{p}=\left(
\prod\limits_{i=1}^{p}a_{i}\right) ^{1/p}.
\end{equation*}
\end{lemma}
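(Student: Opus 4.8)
The plan is to reduce the claimed inequality to a single application of the weighted arithmetic--geometric mean inequality. First I would compare the arithmetic-mean terms appearing on the two sides. Since $pA_{p}=\sum\nolimits_{i=1}^{p}a_{i}$ and $(p-1)A_{p-1}=\sum\nolimits_{i=1}^{p-1}a_{i}$, subtracting isolates the newly added term:
\[
pA_{p}-(p-1)A_{p-1}=a_{p}.
\]
Hence, expanding $p(A_{p}-G_{p})\geq (p-1)(A_{p-1}-G_{p-1})$ and regrouping, the claim is equivalent to
\[
a_{p}+(p-1)G_{p-1}\geq pG_{p}.
\]

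The key structural observation is that $G_{p}$ factors through $G_{p-1}$. From the definition, $G_{p}^{\,p}=\prod\nolimits_{i=1}^{p}a_{i}=a_{p}\prod\nolimits_{i=1}^{p-1}a_{i}=a_{p}\,G_{p-1}^{\,p-1}$, so that $G_{p}=a_{p}^{1/p}\,G_{p-1}^{(p-1)/p}$. Substituting this and dividing by $p$, the target inequality becomes
\[
\frac{1}{p}\,a_{p}+\frac{p-1}{p}\,G_{p-1}\geq a_{p}^{1/p}\,G_{p-1}^{(p-1)/p}.
\]
This is exactly the weighted AM--GM inequality applied to the two positive numbers $a_{p}$ and $G_{p-1}$ with weights $1/p$ and $(p-1)/p$ (equivalently, ordinary AM--GM for the $p$ numbers consisting of one copy of $a_{p}$ together with $p-1$ copies of $G_{p-1}$). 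Since every $a_{i}>0$, we have $G_{p-1}>0$, so the inequality is valid, and equality holds precisely when $a_{p}=G_{p-1}$.

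I do not expect a genuine obstacle here: the whole proof hinges on the two elementary moves, namely the cancellation $pA_{p}-(p-1)A_{p-1}=a_{p}$ and the factorization $G_{p}=a_{p}^{1/p}G_{p-1}^{(p-1)/p}$, after which nothing remains but AM--GM in weighted form. The only point deserving care is the bookkeeping of the rearrangement, i.e.\ verifying that moving $(p-1)A_{p-1}$ and the geometric-mean terms across the inequality leaves a $\geq$ (not a $\leq$) so that the reduction lands on the correct side of AM--GM.
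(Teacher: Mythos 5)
Your proof is correct: the cancellation $pA_{p}-(p-1)A_{p-1}=a_{p}$, the factorization $G_{p}=a_{p}^{1/p}G_{p-1}^{(p-1)/p}$, and the final weighted AM--GM step are all valid, and together they give exactly Rado's inequality as stated. Note that the paper itself offers no proof of this lemma---it is quoted from the reference of Mitrinovi\'{c} and Vasi\'{c}---and your argument is precisely the standard derivation found there, so there is nothing to compare beyond confirming that your reduction is the classical one.
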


\section{Main Results}

In this section, we give the main results of the paper. First, we need the
following lemma. For a graph $G$ with signless Laplacian eigenvalues $%
q_{1}\geq q_{2}\geq \cdots \geq q_{n}$, let 
\begin{equation*}
M_{k}=M_{k}\left( G\right) =\dsum\nolimits_{i=1}^{k}q_{i}
\end{equation*}%
for $1\leq k\leq n-1$. Then, we have:

\begin{lemma}
Let $G$ be a connected graph with $n$ vertices and $m$ edges.

i) If $G$ is bipartite, then for $1\leq k\leq n-2$%
\begin{equation}
M_{k}\left( G\right) \leq \frac{2mk+\sqrt{mk\left( n-k-1\right) \left(
n^{2}-n-2m\right) }}{n-1}
\end{equation}%
with equality holding in (4) if and only if $G$ is either a star $K_{1,n-1}$
or a complete graph $K_{n}$ when $k=1$ and $G$ is a complete graph $K_{n}$
when $2\leq k\leq n-2.$

ii) If $G$ is non-bipartite, then for $1\leq k\leq n-1$ 
\begin{equation}
M_{k}\left( G\right) \leq \frac{2mk+\sqrt{mk\left( n-k\right) \left( n^{2}+%
\frac{2mn}{n-1}-4m\right) }}{n}
\end{equation}%
with equality holding in (5) if and only if $G\cong K_{n}$ when $k=1$.
\end{lemma}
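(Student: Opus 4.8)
The plan is to control $M_{k}$ through nothing more than the first two spectral moments of $Q$. Since $Q=D+A$, the traces give
\[
\sum_{i=1}^{n}q_{i}=\mathrm{tr}\,Q=2m \qquad\text{and}\qquad \sum_{i=1}^{n}q_{i}^{2}=\mathrm{tr}\,Q^{2}=\sum_{i=1}^{n}d_{i}^{2}+2m,
\]
the second identity using $\mathrm{tr}(DA)=\mathrm{tr}(AD)=0$ and $\mathrm{tr}\,A^{2}=2m$. Feeding the degree inequality (2) of Lemma 2.1 into the second identity yields the single scalar input I will use, namely $\sum_{i=1}^{n}q_{i}^{2}\le m\bigl(\tfrac{2m}{n-1}+n\bigr)=:Q_{\max}$, with equality exactly when $G$ is $K_{1,n-1}$ or $K_{n}$.

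Next I would run the classical first‑two‑moments estimate for the sum of the $k$ largest entries of a nonnegative sequence. Let $N$ be the number of strictly positive $q_{i}$: by Lemma 2.2 a connected bipartite graph has $q_{n}=0$, so $N=n-1$, whereas by Lemma 2.3 a connected non‑bipartite graph has $\prod_{i=1}^{n}q_{i}=t_{1}(G)>0$, so every $q_{i}>0$ and $N=n$. (The admissible ranges of $k$ in the statement are precisely those keeping the bottom block nonempty, $N-k\ge 1$.) With $S:=\sum_{i=1}^{N}q_{i}=2m$, Cauchy--Schwarz applied separately to the top $k$ and the bottom $N-k$ positive eigenvalues gives $\sum_{i\le k}q_{i}^{2}\ge M_{k}^{2}/k$ and $\sum_{k<i\le N}q_{i}^{2}\ge (S-M_{k})^{2}/(N-k)$; adding these and bounding the left‑hand side by $Q_{\max}$ produces the quadratic inequality $N M_{k}^{2}-2kS\,M_{k}+kS^{2}-k(N-k)Q_{\max}\le 0$. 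Since the leading coefficient is positive, $M_{k}$ lies below the larger root,
\[
M_{k}\le \frac{kS+\sqrt{\,k(N-k)\bigl(N\,Q_{\max}-S^{2}\bigr)}}{N}.
\]
It then remains only to substitute: with $S=2m$ the discriminant collapses to $N\,Q_{\max}-S^{2}=m(n^{2}-n-2m)$ when $N=n-1$, which is (4), and to $m\bigl(n^{2}+\tfrac{2mn}{n-1}-4m\bigr)$ when $N=n$, which is (5); in both cases $n^{2}-n-2m\ge 0$ keeps the radicand nonnegative.

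The hard part will be the equality analysis, since equality has to hold simultaneously in two independent inequalities. Equality in the quadratic step forces equality in both Cauchy--Schwarz bounds, i.e.\ $q_{1}=\cdots=q_{k}$ and $q_{k+1}=\cdots=q_{N}$, while equality in the moment bound forces $G\in\{K_{1,n-1},K_{n}\}$ through Lemma 2.1. In the non‑bipartite case only $K_{n}$ is admissible; its signless Laplacian spectrum is $2n-2$ together with $n-2$ repeated $n-1$ times, so the top‑block flatness $q_{1}=\cdots=q_{k}$ can hold only for $k=1$, giving exactly the characterization in (ii) (confirmed by substituting $q_{1}=2n-2$ into the right‑hand side of (5)). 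In the bipartite case I would argue with the star spectrum $n,1,\dots,1,0$ of $K_{1,n-1}$: the bottom‑block equalities $q_{2}=\cdots=q_{n-1}$ are automatic, so the star attains (4) when $k=1$, whereas for $k\ge 2$ the top block can no longer be flat. Recording these flatness constraints together with the Lemma 2.1 equality condition, and verifying the extremal graphs by the same kind of direct substitution used above, completes the case bookkeeping.
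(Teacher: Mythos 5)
Your proof is correct, and for part (ii) it is essentially the paper's own argument: the same first two spectral moments of $Q$, the same Cauchy--Schwarz step, the same quadratic in $M_{k}$, and the same appeal to Lemma 2.1 --- your ``two-sided'' Cauchy--Schwarz (top block plus bottom block) rearranges to exactly the paper's inequality $\left( 2m-M_{k}\right) ^{2}\leq \left( n-k\right) \left( 2m+\sum_{i}d_{i}^{2}-M_{k}^{2}/k\right) $. The genuine difference is in part (i): the paper does not prove it at all, citing \cite{40} instead, whereas you obtain it from the same unified computation by running the argument over the $N$ strictly positive eigenvalues, with $N=n-1$ in the bipartite case (since $q_{n}=0$ by Lemma 2.2) and $N=n$ in the non-bipartite case (since $\prod_{i}q_{i}=t_{1}>0$ by Lemma 2.3); this makes the lemma self-contained and shows exactly where the two different radicands $m(n^{2}-n-2m)$ and $m\left( n^{2}+\frac{2mn}{n-1}-4m\right) $ come from. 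The equality analyses also diverge slightly: to pin down $G\cong K_{n}$ the paper invokes Lemma 2.7 (two distinct signless Laplacian eigenvalues force diameter at most $1$), while you go through the equality case of Lemma 2.1 ($G\in \left\{ K_{1,n-1},K_{n}\right\} $) and discard the bipartite star; both are valid, and your direct substitution check for $K_{n}$ with $k=1$ supplies the sufficiency direction that the paper leaves implicit. One cosmetic point you inherit from the statement itself: in part (i) the extremal graph $K_{n}$, $n\geq 3$, is not bipartite, so within the bipartite class the only attainable equality is $K_{1,n-1}$ with $k=1$; your analysis reflects this correctly.
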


\begin{proof}
The inequality (4) was established in \cite{40}. So we omit its proof here.
Now we only prove the inequality (5). Let $M_{k}=M_{k}\left( G\right) $. It
is clear that \cite{7}%
\begin{equation*}
q_{1}+q_{2}+\cdots +q_{n}=2m 
\end{equation*}%
and%
\begin{equation*}
q_{1}^{2}+q_{2}^{2}+\cdots +q_{n}^{2}=2m+\dsum\nolimits_{i=1}^{n}d_{i}^{2} 
\end{equation*}%
Then, using Cauchy-Schwarz inequality, we get%
\begin{eqnarray*}
\left( 2m-M_{k}\right) ^{2} &=&\left( q_{k+1}+\cdots +q_{n}\right) ^{2} \\
&\leq &\left( n-k\right) \left( q_{k+1}^{2}+\cdots +q_{n}^{2}\right) \\
&=&\left( n-k\right) \left( 2m+\dsum\nolimits_{i=1}^{n}d_{i}^{2}-\left(
q_{1}^{2}+\cdots +q_{k}^{2}\right) \right) \\
&\leq &\left( n-k\right) \left( 2m+\dsum\nolimits_{i=1}^{n}d_{i}^{2}-\frac{1%
}{k}M_{k}^{2}\right) .
\end{eqnarray*}%
Therefore%
\begin{equation}
M_{k}\leq \left\{ 2mk+\left[ k\left( n-k\right) \left( n\left(
2m+\dsum\nolimits_{i=1}^{n}d_{i}^{2}\right) -4m^{2}\right) \right]
^{1/2}\right\} /n.
\end{equation}%
From the inequality (6) and Lemma 2.1, the inequality (5) holds. Now we
suppose that the equality holds in (5). Then, by Cauchy-Schwarz inequality
we have $q_{1}=\cdots =q_{k}$ and $q_{k+1}=\cdots =q_{n}$. Since $G$ is
connected non-bipartite graph, by Lemma 2.1 and Lemma 2.7, we conclude that $%
G\cong K_{n}$ when $k=1.$
\end{proof}

The following result can be found in \cite{14}.

\begin{theorem}
\cite{14} Let $G$ be a bipartite graph with $n\geq 2$ vertices, $m$ edges
and positive integer $k$ $\left( 1\leq k\leq n-2\right) $.

(i) If $0<\alpha <1$, then%
\begin{equation}
s_{\alpha }\left( G\right) =\sigma _{\alpha }\left( G\right) \leq
k^{1-\alpha }\left( \frac{2mk}{n-1}\right) ^{\alpha }+\left( n-k-1\right)
^{1-\alpha }\left( 2m-\frac{2mk}{n-1}\right) ^{\alpha }
\end{equation}%
with equality holding in (7) if and only if $G\cong K_{n}$ or $G\cong 
\overline{K}_{n}$.

(ii) If $\alpha >1$, then 
\begin{equation}
s_{\alpha }\left( G\right) =\sigma _{\alpha }\left( G\right) \geq
k^{1-\alpha }\left( \frac{2mk}{n-1}\right) ^{\alpha }+\left( n-k-1\right)
^{1-\alpha }\left( 2m-\frac{2mk}{n-1}\right) ^{\alpha }
\end{equation}%
with equality holding in (8) if and only if $G\cong K_{n}$ or $G\cong 
\overline{K}_{n}$.

(iii) If $G$ is connected and $\alpha <0$, then 
\begin{equation}
\begin{array}{c}
s_{\alpha }\left( G\right) =\sigma _{\alpha }\left( G\right) \leq
\min\limits_{1\leq k\leq n-2}\left\{ k^{1-\alpha }\left[ \frac{2mk+\sqrt{%
mk\left( n-k-1\right) \left( n^{2}-n-2m\right) }}{n-1}\right] ^{\alpha
}\right. \\ 
\left. +\left( n-k-1\right) ^{1-\alpha }\left[ \frac{2mk\left( n-k-1\right) -%
\sqrt{mk\left( n-k-1\right) \left( n^{2}-n-2m\right) }}{n-1}\right] ^{\alpha
}\right\}%
\end{array}%
\end{equation}%
with equality holding in (9) if and only if $G\cong K_{1,n-1}$ $\left(
k=1\right) $ and $G\cong K_{n}$ $\left( 2\leq k\leq n-2\right) .$
\end{theorem}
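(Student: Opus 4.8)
The plan is to reduce all three parts to an extremal question about the $n-1$ positive numbers $\mu_1\ge\cdots\ge\mu_{n-1}$. Since $G$ is bipartite, Lemma 2.2 gives $q_i=\mu_i$, so $s_\alpha(G)=\sigma_\alpha(G)=\sum_{i=1}^{n-1}\mu_i^\alpha$ subject to the single linear constraint $\sum_{i=1}^{n-1}\mu_i=2m$. For a fixed split size $k$ I set $M_k=\sum_{i=1}^{k}\mu_i$ and apply the power-mean (Jensen) inequality to each of the two blocks $\{\mu_1,\dots,\mu_k\}$ and $\{\mu_{k+1},\dots,\mu_{n-1}\}$, whose sums are $M_k$ and $2m-M_k$. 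With
\[
g(x):=k^{1-\alpha}x^{\alpha}+(n-k-1)^{1-\alpha}(2m-x)^{\alpha},
\]
concavity of $t\mapsto t^{\alpha}$ for $0<\alpha<1$ gives $\sigma_\alpha(G)\le g(M_k)$, while convexity for $\alpha>1$ gives $\sigma_\alpha(G)\ge g(M_k)$; in both regimes the Jensen step carries the same sign in the two blocks.

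For parts (i) and (ii) I would then optimise the one-variable function $g$. A direct differentiation gives the unique interior critical point at $x=\tfrac{2mk}{n-1}$ (the relation $k^{1-\alpha}x^{\alpha-1}=(n-k-1)^{1-\alpha}(2m-x)^{\alpha-1}$ reduces, since $1-\alpha\neq 0$, to $k(2m-x)=(n-k-1)x$), and $g''$ has the sign of $\alpha(\alpha-1)$, so $g$ is concave exactly for $0<\alpha<1$ and convex exactly for $\alpha>1$. Hence $g(M_k)\le g(\tfrac{2mk}{n-1})$ in case (i) and $g(M_k)\ge g(\tfrac{2mk}{n-1})$ in case (ii); chaining with the Jensen step (whose sign matches) yields precisely the right-hand sides of (7) and (8). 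The critical value corresponds to all $\mu_i$ equal, so equality in Jensen together with equality in the $g$-optimisation forces $\mu_1=\cdots=\mu_{n-1}$, which by Lemma 2.10 means $G\cong K_n$ or $G\cong\overline{K}_n$. The only bookkeeping is to check $\tfrac{2mk}{n-1}$ lies in the admissible range of $M_k$ for $1\le k\le n-2$, which is immediate.

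For part (iii), with $\alpha<0$ the map $t\mapsto t^{\alpha}$ is convex and decreasing, so the naive Jensen step runs the wrong way for an upper bound. Instead the plan is to feed in the second moment: for bipartite $G$ one has $\sum_{i=1}^{n-1}\mu_i^{2}=2m+\sum_i d_i^{2}$, bounded above through Lemma 2.1, which is exactly what produces the Cauchy--Schwarz estimate (4) of Lemma 3.1, namely $M_k\le U_k:=\tfrac{2mk+\sqrt{mk(n-k-1)(n^2-n-2m)}}{n-1}$, together with the complementary bound $2m-M_k\ge 2m-U_k$ on the lower block. The key is that equality in (4) is attained precisely by the two-valued configuration $\mu_1=\cdots=\mu_k$, $\mu_{k+1}=\cdots=\mu_{n-1}$, so the spectrum compatible with the first two moments that extremises $\sum\mu_i^\alpha$ is two-valued; evaluating $\sum\mu_i^\alpha$ at this configuration and then minimising over the free parameter $k$ gives (9). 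For $k=1$ the configuration is the spectrum of the star $K_{1,n-1}$ (eigenvalue $n$ once and $1$ with multiplicity $n-2$), explaining the equality graph there, while for $2\le k\le n-2$ the matching two-valued extremiser forces $G\cong K_n$.

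The hard part is exactly this last step. Unlike (i) and (ii), convexity alone does not supply an upper bound when $\alpha<0$; one must argue that among all admissible $(\mu_i)$ with the prescribed first and second moments the sum $\sum\mu_i^\alpha$ is \emph{maximised} by the two-valued vector pinned down by equality in Cauchy--Schwarz. I would establish this either by a Lagrange/KKT analysis --- the stationarity relation $\alpha\mu_i^{\alpha-1}=\lambda+2\beta\mu_i$ is an increasing concave curve meeting a line in at most two points, so any extremiser takes at most two distinct values --- or, equivalently, by a Schur-convexity (majorisation) comparison against the two-block vector. Care is also needed with the range of $k$ for which the radicand is nonnegative, and with checking that the claimed equality graphs realise simultaneous equality in Lemma 2.1 and in both Cauchy--Schwarz steps.
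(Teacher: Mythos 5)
Your treatment of parts (i) and (ii) is correct, and it is essentially the same argument the paper itself uses when proving the non-bipartite analogue (Theorem 3.3(i)--(ii)): the power-mean inequality applied to the two blocks, followed by the one-variable optimisation of $g$ at $x=\tfrac{2mk}{n-1}$, with Lemma 2.10 supplying the equality characterisation. (For Theorem 3.2 itself the paper gives no proof at all; it is simply quoted from \cite{14}.) The only stylistic difference is that you argue via concavity/convexity of $g$ and its interior critical point, whereas the paper argues via the sign of $g'$ on the half-line $x\ge \tfrac{2mk}{n}$ together with the a priori bound $M_k\ge \tfrac{2mk}{n}$; these are interchangeable.

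Part (iii) is where the genuine gap lies, and the step you yourself flag as ``the hard part'' is not merely hard --- it is false, so your plan cannot be completed. You propose to show that, among spectra with prescribed first moment $2m$ and second moment controlled by Lemma 2.1, the quantity $\sum_i\mu_i^{\alpha}$ with $\alpha<0$ is \emph{maximised} by the two-valued configuration coming from equality in Cauchy--Schwarz. But the supremum of $\sum_i\mu_i^{\alpha}$ subject to those two moment constraints is $+\infty$: the objective blows up as any $\mu_i\to 0^{+}$, and fixing $\sum_i\mu_i$ and $\sum_i\mu_i^{2}$ does not keep the smallest eigenvalue away from $0$. Your Lagrange/KKT analysis only classifies interior critical points and is silent about this boundary blow-up, and no Schur-convexity comparison can overcome it. Worse, inequality (9) itself is false as stated. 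Take $G=P_4$ (connected, bipartite), $\alpha=-1$, $k=1$: then $n=4$, $m=3$, $mk(n-k-1)(n^2-n-2m)=3\cdot 1\cdot 2\cdot 6=36$, so the two brackets in (9) are $(6+6)/3=4$ and $(12-6)/3=2$ (for $k=1$ the apparent typo $2mk(n-k-1)$ versus $2m(n-k-1)$ is immaterial), giving the value $1^{2}\cdot 4^{-1}+2^{2}\cdot 2^{-1}=\tfrac{9}{4}$; the $k=2$ term is no larger, so the right-hand side of (9) is at most $\tfrac94$. Yet the Laplacian spectrum of $P_4$ is $\{2+\sqrt{2},\,2,\,2-\sqrt{2},\,0\}$, so $\sigma_{-1}(P_4)=2+\tfrac12=\tfrac52>\tfrac94$. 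More generally $\sigma_{-1}(P_n)=(n^{2}-1)/6$ grows quadratically while the $k=1$ bound $n^{-1}+(n-2)$ grows only linearly; the star $K_{1,n-1}$, which attains (9), actually \emph{minimises} $\sigma_{-1}$ among trees, so no upper bound can be anchored at it. The moral is the one your own observation about Jensen already points to: for $\alpha<0$ the first and second spectral moments can never yield an upper bound on $\sigma_{\alpha}$; one needs control of the small eigenvalues, e.g.\ through $\prod_i\mu_i=nt$ as in Theorems 3.4 and 3.8. You should also note that the paper's proof of its non-bipartite analogue, Theorem 3.3(iii), silently performs the invalid step $s_{\alpha}\le f(M_k)$ for $\alpha<0$ --- exactly the step you declined to make --- and its bound (12) fails for the same reason (e.g.\ $C_5$, $\alpha=-1$: $s_{-1}=\tfrac{25}{4}$, while the right-hand side of (12) is approximately $3.93$).
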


We now extend the above result to non-bipartite graphs.

\begin{theorem}
Let $G$ be a non-bipartite graph with $n\geq 2$ vertices, $m$ edges and
positive integer $k$ $\left( 1\leq k\leq n-1\right) $.

(i) If $0<\alpha <1$, then%
\begin{equation}
s_{\alpha }\left( G\right) \leq k^{1-\alpha }\left( \frac{2mk}{n}\right)
^{\alpha }+\left( n-k\right) ^{1-\alpha }\left( 2m-\frac{2mk}{n}\right)
^{\alpha }
\end{equation}%
with equality holding in (10) if and only if $G\cong \overline{K}_{n}$.

(ii) If $\alpha >1$, then 
\begin{equation}
s_{\alpha }\left( G\right) \geq k^{1-\alpha }\left( \frac{2mk}{n}\right)
^{\alpha }+\left( n-k\right) ^{1-\alpha }\left( 2m-\frac{2mk}{n}\right)
^{\alpha }
\end{equation}%
with equality holding in (11) if and only if $G\cong \overline{K}_{n}$.

(iii) If $G$ is connected and $\alpha <0$, then%
\begin{equation}
\begin{array}{c}
s_{\alpha }\left( G\right) \leq \min\limits_{1\leq k\leq n-1}\left\{
k^{1-\alpha }\left[ \frac{2mk+\sqrt{mk\left( n-k\right) \left( n^{2}+\frac{%
2mn}{n-1}-4m\right) }}{n}\right] ^{\alpha }\right. \\ 
\left. +\left( n-k\right) ^{1-\alpha }\left[ \frac{2m\left( n-k\right) -%
\sqrt{mk\left( n-k\right) \left( n^{2}+\frac{2mn}{n-1}-4m\right) }}{n}\right]
^{\alpha }\right\}%
\end{array}%
\end{equation}%
with equality holding in (12) if and only if $G\cong K_{n}$ when $k=1$.
\end{theorem}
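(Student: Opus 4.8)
The plan is to reduce all three parts to the behaviour of the single function
\[
g(x)=k^{1-\alpha}x^{\alpha}+(n-k)^{1-\alpha}(2m-x)^{\alpha},
\]
evaluated at the partial sum $x=M_k=M_k(G)$. Throughout I use the normalisation $\sum_{i=1}^{n}q_i=2m$ (the trace of $Q(G)$), already quoted in the proof of Lemma 3.1. First I would apply the power--mean (Jensen) inequality separately to the two blocks $q_1,\dots,q_k$ and $q_{k+1},\dots,q_n$, whose sums are $M_k$ and $2m-M_k$, obtaining
\[
\sum_{i=1}^{k}q_i^{\alpha}\le k^{1-\alpha}M_k^{\alpha},\qquad
\sum_{i=k+1}^{n}q_i^{\alpha}\le (n-k)^{1-\alpha}(2m-M_k)^{\alpha}\qquad(0<\alpha<1),
\]
with both inequalities reversed when $\alpha>1$ or $\alpha<0$, the direction being dictated by the concavity/convexity of $t\mapsto t^{\alpha}$. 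Adding the two lines gives $s_{\alpha}(G)\le g(M_k)$ for $0<\alpha<1$ and $s_{\alpha}(G)\ge g(M_k)$ for $\alpha>1$ and for $\alpha<0$.

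For parts (i) and (ii) I would then optimise $g$ in $x$. A one--line computation gives $g'(x)=0$ exactly at $x^{*}=2mk/n$, and $g''$ has the sign of $\alpha(\alpha-1)$; hence $g$ attains its maximum at $x^{*}$ when $0<\alpha<1$ and its minimum at $x^{*}$ when $\alpha>1$. Since $g(x^{*})=n^{1-\alpha}(2m)^{\alpha}$ collapses exactly to the right-hand sides of (10) and (11), chaining $s_{\alpha}\le g(M_k)\le g(x^{*})$ (resp. $s_{\alpha}\ge g(M_k)\ge g(x^{*})$) yields the two bounds. For equality I would note that equality in the block step forces $q_1=\cdots=q_k$ and $q_{k+1}=\cdots=q_n$, while $g(M_k)=g(x^{*})$ forces $M_k=x^{*}$; together these make all $q_i$ equal, so $Q(G)=cI$, whence $A(G)=0$ and $G\cong\overline{K}_n$, the asserted case.

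For part (iii) ($\alpha<0$, $G$ connected) the plan is to combine the block estimate with Lemma 3.1(ii). Here $M_k\ge 2mk/n=x^{*}$ always, since the $k$ largest eigenvalues have average at least the global average $2m/n$, and Lemma 3.1(ii) supplies the upper end $M_k\le M_k^{\max}:=\bigl(2mk+\sqrt{mk(n-k)(n^{2}+\tfrac{2mn}{n-1}-4m)}\bigr)/n$; because $g$ is convex with minimum at $x^{*}$, it is increasing on $[x^{*},M_k^{\max}]$, so transporting the block estimate to the endpoint $M_k^{\max}$ is meant to produce precisely the $k$-th expression in (12), after which one minimises over $k$. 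The equality analysis then invokes the equality case of Lemma 3.1(ii) (which for $k=1$ forces $G\cong K_n$) together with the block power--mean equality. The main obstacle is exactly this part: for $\alpha<0$ the block inequality runs in the convex direction ($s_{\alpha}\ge g(M_k)$), while monotonicity only gives $g(M_k)\le g(M_k^{\max})$, so one must be scrupulous about which inequality is actually being transported to the endpoint and about keeping $2m-M_k^{\max}$ positive (else the second bracket is meaningless). Reconciling these two inequality directions while passing from $M_k$ to its extreme admissible value is the delicate step, whereas parts (i) and (ii) are routine once $g$ is introduced.
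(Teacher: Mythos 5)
For parts (i) and (ii) your argument is the paper's own, essentially line for line: the same two-block power-mean step applied to $q_{1},\dots ,q_{k}$ and $q_{k+1},\dots ,q_{n}$, the same one-variable function (the paper's $f$, your $g$), the same critical point $2mk/n$, and the same equality analysis forcing $q_{1}=\cdots =q_{n}=2m/n$, hence $G\cong \overline{K}_{n}$. Whether one argues via concavity/convexity at the interior critical point (your version) or via monotonicity of $f$ on $[2mk/n,\infty )$ (the paper's version) is immaterial.

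The obstacle you flag in part (iii) is not a technicality you failed to smooth over; it is a genuine and fatal gap, and the paper's own proof falls into exactly the trap you describe. For $\alpha <0$ the paper invokes Lemma 3.1(ii) to get $M_{k}\leq M_{k}^{\max }$, shows $f$ is increasing on $[2mk/n,\,M_{k}^{\max }]$, and concludes ``therefore $f(x)\leq f(M_{k}^{\max })$, hence (12) holds.'' But for $\alpha <0$ the only available comparison between $s_{\alpha }$ and $f$ runs the other way: convexity of $t\mapsto t^{\alpha }$ gives $s_{\alpha }(G)\geq f(M_{k})$, and from $s_{\alpha }\geq f(M_{k})$ together with $f(M_{k})\leq f(M_{k}^{\max })$ no upper bound on $s_{\alpha }$ can be extracted. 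Worse, the statement itself is false, so no repair of the transport step is possible. Take $G=C_{5}$ (connected, non-bipartite, $n=m=5$) and $\alpha =-1$: the signless Laplacian eigenvalues are $4$, $(3+\sqrt{5})/2$ (twice) and $(3-\sqrt{5})/2$ (twice), so $s_{-1}(C_{5})=\frac{1}{4}+6=\frac{25}{4}$. On the other hand, for $k=1$ one computes $n^{2}+\frac{2mn}{n-1}-4m=\frac{35}{2}$ and $\sqrt{mk(n-k)\cdot \frac{35}{2}}=\sqrt{350}\approx 18.71$, so the $k=1$ expression in (12) is approximately $\frac{1}{5.74}+\frac{16}{4.26}\approx 3.93$; the minimum in (12) is therefore at most $3.93<\frac{25}{4}$, contradicting (12). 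What the two correct inequalities actually yield for $\alpha <0$ is only the lower bound $s_{\alpha }(G)\geq f(M_{k})\geq f(2mk/n)$, i.e.\ the bound (11) extended to negative $\alpha $. The same defective step occurs in the proof of the bipartite analogue (9) quoted from [14], so your scruple applies there as well.
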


\begin{proof}
Using power mean inequality, we get%
\begin{equation}
\sum\limits_{i=1}^{k}q_{i}^{\alpha }\leq k^{1-\alpha }\left(
\sum\limits_{i=1}^{k}q_{i}\right) ^{\alpha }\text{, as }0<\alpha <1
\end{equation}%
with equality holding in (13) if and only if $q_{1}=q_{2}=\cdots =q_{k}$.

Considering the above manner, we also get%
\begin{equation}
\sum\limits_{i=k+1}^{n}q_{i}^{\alpha }\leq \left( n-k\right) ^{1-\alpha
}\left( 2m-\sum\limits_{i=1}^{k}q_{i}\right) ^{\alpha }\text{, as }%
\dsum\nolimits_{i=1}^{n}q_{i}=2m\text{ \cite{7},}
\end{equation}%
with equality holding in (14) if and only if $q_{k+1}=q_{k+2}=\cdots =q_{n}$%
. Since $q_{1}\geq $ $q_{2}\geq \cdots \geq q_{n}$, we have 
\begin{equation*}
\frac{\sum\nolimits_{i=1}^{k}q_{i}}{k}\geq \frac{\sum%
\nolimits_{i=k+1}^{n}q_{i}}{n-k}=\frac{2m-\sum\nolimits_{i=1}^{k}q_{i}}{n-k}%
. 
\end{equation*}%
Therefore, we get%
\begin{equation}
\sum\nolimits_{i=1}^{k}q_{i}\geq \frac{2mk}{n}.
\end{equation}%
By \ Eqs. (13) and (14), we obtain%
\begin{eqnarray*}
s_{\alpha }\left( G\right) &=&\sum\limits_{i=1}^{n}q_{i}^{\alpha
}=\sum\limits_{i=1}^{k}q_{i}^{\alpha }+\sum\limits_{i=k+1}^{n}q_{i}^{\alpha }
\\
&\leq &k^{1-\alpha }\left( \sum\limits_{i=1}^{k}q_{i}\right) ^{\alpha
}+\left( n-k\right) ^{1-\alpha }\left( 2m-\sum\limits_{i=1}^{k}q_{i}\right)
^{\alpha }.
\end{eqnarray*}%
Now consider the following function%
\begin{equation*}
f\left( x\right) =k^{1-\alpha }x^{\alpha }+\left( n-k\right) ^{1-\alpha
}\left( 2m-x\right) ^{\alpha } 
\end{equation*}%
for $x\geq \frac{2mk}{n}$. Then it is easy to see that 
\begin{equation*}
f^{\prime }\left( x\right) =\alpha \left[ \left( \frac{x}{k}\right) ^{\alpha
-1}-\left( \frac{2m-x}{n-k}\right) ^{\alpha -1}\right] \leq 0\text{, as }%
0<\alpha <1. 
\end{equation*}%
Thus, by (15), we get%
\begin{equation*}
f\left( x\right) \leq f\left( \frac{2mk}{n}\right) =k^{1-\alpha }\left( 
\frac{2mk}{n}\right) ^{\alpha }+\left( n-k\right) ^{1-\alpha }\left( 2m-%
\frac{2mk}{n}\right) ^{\alpha }. 
\end{equation*}%
Hence we get the the inequality (10). Now we suppose that the equality holds
in (10). Then, from (13) and (14) we have $q_{1}=q_{2}=\cdots =q_{k}$ and $%
q_{k+1}=q_{k+2}=\cdots =q_{n}$, respectively. Furthermore from (15), we have%
\begin{equation*}
\sum\limits_{i=1}^{k}q_{i}=\frac{2mk}{n}. 
\end{equation*}%
Therefore 
\begin{equation*}
q_{1}=q_{2}=\cdots =q_{n}=\frac{2m}{n}. 
\end{equation*}%
Then, we conclude that $G\cong \overline{K}_{n}$.

Conversely, one can easily show that the equality holds in (10) for the
complement of the complete graph $\overline{K}_{n}.$

(ii) Using power mean inequality, from (i), we obtain%
\begin{equation*}
s_{\alpha }\left( G\right) \geq k^{1-\alpha }\left(
\sum\limits_{i=1}^{k}q_{i}\right) ^{\alpha }+\left( n-k\right) ^{1-\alpha
}\left( 2m-\sum\limits_{i=1}^{k}q_{i}\right) ^{\alpha },\text{as }\alpha >1. 
\end{equation*}%
Note that $f\left( x\right) $ is increasing function for $x\geq \frac{2mk}{n}
$ as $\alpha >1$. Then, similar to the proof of (i), we get the inequality
(11). Furthermore, the equality holds in (11) if and only if $G\cong 
\overline{K}_{n}$.

(iii) From Lemma 3.1, we have%
\begin{equation*}
\sum\limits_{i=1}^{k}q_{i}\leq \frac{2mk+\sqrt{mk\left( n-k\right) \left(
n^{2}+\frac{2mn}{n-1}-4m\right) }}{n}. 
\end{equation*}%
As $\alpha <0$, from (i), we obtain that $f\left( x\right) $ is increasing
function for 
\begin{equation*}
\frac{2mk}{n}\leq x\leq \frac{1}{n}\left[ 2mk+\sqrt{mk\left( n-k\right)
\left( n^{2}+\frac{2mn}{n-1}-4m\right) }\right] . 
\end{equation*}%
Therefore%
\begin{eqnarray*}
f\left( x\right) &\leq &k^{1-\alpha }\left( \frac{2mk+\sqrt{mk\left(
n-k\right) \left( n^{2}+\frac{2mn}{n-1}-4m\right) }}{n}\right) ^{\alpha
}+\left( n-k\right) ^{1-\alpha } \\
&&\times \left( \frac{2m\left( n-k\right) -\sqrt{mk\left( n-k\right) \left(
n^{2}+\frac{2mn}{n-1}-4m\right) }}{n}\right) ^{\alpha }.
\end{eqnarray*}%
Hence the inequality (12) holds. Now we suppose that the equality holds in
(12). Therefore we get that 
\begin{equation*}
q_{1}=q_{2}=\cdots =q_{k}\text{, }q_{k+1}=q_{k+2}=\cdots =q_{n} 
\end{equation*}%
and 
\begin{equation*}
\sum\limits_{i=1}^{k}q_{i}=\frac{2mk+\sqrt{mk\left( n-k\right) \left( n^{2}+%
\frac{2mn}{n-1}-4m\right) }}{n}. 
\end{equation*}%
Then, from Lemma 3.1, we conclude that $G\cong K_{n}$ when $k=1$.

Conversely, let $G$ be isomorphic to the complete graph $K_{n}$ when $k=1$.
Thus%
\begin{eqnarray*}
&&k^{1-\alpha }\left[ \frac{2mk+\sqrt{mk\left( n-k\right) \left( n^{2}+\frac{%
2mn}{n-1}-4m\right) }}{n}\right] ^{\alpha }+\left( n-k\right) ^{1-\alpha } \\
&&\times \left[ \frac{2m\left( n-k\right) -\sqrt{mk\left( n-k\right) \left(
n^{2}+\frac{2mn}{n-1}-4m\right) }}{n}\right] ^{\alpha } \\
&=&\left( 2\left( n-1\right) \right) ^{\alpha }+\left( n-1\right) \left(
n-2\right) ^{\alpha }\text{, as }k=1\text{, }m=n\left( n-1\right) /2 \\
&=&s_{\alpha }\left( G\right) \text{, since }q_{1}=2\left( n-1\right) ,\text{
}q_{2}=\cdots =q_{n}=n-2.
\end{eqnarray*}%
This completes the proof of theorem.
\end{proof}

\begin{theorem}
Let $\alpha $ be a real number with $\alpha \neq 0,1$ and let $G$ be a
connected graph with $n\geq 3$ vertices and $t$ spanning trees and also let $%
t_{1}$ and $T$ be given by (1). For any real number $k\geq 0$,

i) if $G$ is bipartite, then 
\begin{equation}
s_{\alpha }\left( G\right) =\sigma _{\alpha }\left( G\right) >\left(
n-2\right) \left( nt\right) ^{\alpha /\left( n-1\right) }\left[ \frac{\left(
k+1\right) \left( nt\right) ^{\alpha /\left[ \left( k+1\right) \left(
n-1\right) \left( n-2\right) \right] }}{T^{\alpha /\left[ \left( k+1\right)
\left( n-2\right) \right] }}-k\right] +T^{\alpha }.
\end{equation}

ii) If $G$ is non-bipartite, then%
\begin{equation}
s_{\alpha }\left( G\right) >\left( n-1\right) \left( t_{1}\right) ^{\alpha
/n}\left[ \frac{\left( k+1\right) \left( t_{1}\right) ^{\alpha /\left[
\left( k+1\right) n\left( n-1\right) \right] }}{T^{\alpha /\left[ \left(
k+1\right) \left( n-1\right) \right] }}-k\right] +T^{\alpha }.
\end{equation}
\end{theorem}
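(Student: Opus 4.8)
The plan is to derive (16) and (17) from a single weighted-mean argument; I carry out the bipartite case (i) in detail and then read off (ii) by an index shift. Since $G$ is connected and bipartite, Lemma 2.2 gives $q_n=0$, so $s_\alpha=\sigma_\alpha=\sum_{i=1}^{n-1}q_i^\alpha$, while Lemma 2.3 gives $\prod_{i=1}^{n-1}q_i=nt$; write $g=(nt)^{1/(n-1)}$ for the geometric mean of the non-zero eigenvalues. The free real parameter $k$ must enter through the \emph{weights} in the refinement of weighted AM--GM recorded in Lemma 2.11: I would apply (3) to $a_i=q_i^\alpha$ $(1\le i\le n-1)$ with
\[
p_1=\frac{k}{(k+1)(n-1)},\qquad p_i=p_1+\frac{1}{(k+1)(n-2)}\quad(2\le i\le n-1),
\]
which satisfy $\sum p_i=1$ and $\lambda=\min_i p_i=p_1$. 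The asymmetric weight on $q_1$ is the device that singles out the largest eigenvalue.

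The point of this choice is an exact cancellation. Writing $p:=p_2$, the left side of (3) is $(p_1-p)q_1^\alpha+p\,s_\alpha-q_1^{\alpha(p_1-p)}(nt)^{\alpha p}$ (the product being evaluated from $\prod_{i\ge2}q_i=nt/q_1$), while its right side is $p_1 s_\alpha-(n-1)p_1 g^\alpha$. The coefficients of $s_\alpha$ differ by exactly $p-p_1=\frac1{(k+1)(n-2)}>0$, so after transposing and dividing by $p-p_1$ one is left, after simplifying the exponents, with
\[
s_\alpha\ \ge\ (n-2)(k+1)\,g^{\alpha}\left(\frac{g}{q_1}\right)^{\alpha/[(k+1)(n-2)]}+q_1^{\alpha}-(n-2)k\,g^{\alpha}=:\psi(q_1)-(n-2)k\,g^{\alpha}.
\]
This is precisely the right-hand side of (16) but with $q_1$ in place of $T$, so everything reduces to replacing $q_1$ by the lower bound $T$ of Lemma 2.4 in the correct direction.

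For that I would analyse $\psi(x)=(n-2)(k+1)g^{\alpha}(g/x)^{\alpha/[(k+1)(n-2)]}+x^{\alpha}$. A short computation gives $\psi'(x)=\alpha x^{-\gamma-1}\bigl(x^{\alpha+\gamma}-g^{\alpha+\gamma}\bigr)$ with $\gamma=\alpha/[(k+1)(n-2)]$; since $\gamma$ and hence $\alpha+\gamma$ have the sign of $\alpha$, the sign of the prefactor $\alpha$ and the monotonicity of $x^{\alpha+\gamma}$ compensate, so that $\psi'(x)>0$ exactly when $x>g$, \emph{independently} of the sign of $\alpha$. This uniformity is what allows (16) to be stated for all $\alpha\neq0,1$ at once. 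Thus $\psi$ increases on $[g,\infty)$, and since $q_1\ge T$ by Lemma 2.4, it suffices to know $T\ge g$; then $\psi(q_1)\ge\psi(T)$ and the chain closes. The estimate $T\ge g$ is the one genuinely new ingredient and I expect it to be the main obstacle, but it follows from AM--GM together with Lemma 2.4:
\[
g\le\frac{1}{n-1}\sum_{i=1}^{n-1}q_i=\frac{2m}{n-1}=\frac{\sum_i d_i}{n-1}\le\frac{n\Delta_1}{n-1}\le\Delta_1+1\le T,
\]
the step $\frac{n\Delta_1}{n-1}\le\Delta_1+1$ using $\Delta_1\le n-1$.

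Finally $\psi(T)-(n-2)k\,g^\alpha$ is, after reinserting $g=(nt)^{1/(n-1)}$ and collecting the powers of $g$, identical to the right-hand side of (16). Strictness comes from Lemma 2.11, which is an equality only when $q_1=\cdots=q_{n-1}$; by Lemma 2.2 and Lemma 2.10 this cannot happen for a connected bipartite graph on $n\ge3$ vertices (it would force $G\cong K_n$ or $G\cong\overline{K}_n$), so the inequality is strict whenever $k>0$ (for $k=0$ the weight on $q_1$ vanishes and a star already gives equality, so there one only obtains $\ge$). Part (ii) is the same computation with $n-1\mapsto n$ and $n-2\mapsto n-1$, using $\prod_{i=1}^{n}q_i=t_1$ from Lemma 2.3 and $g=t_1^{1/n}$; here the auxiliary bound is even cleaner, $g\le\frac{2m}{n}\le\Delta_1<\Delta_1+1\le T$, and the all-equal case $q_1=\cdots=q_n$ is impossible for any connected graph with an edge.
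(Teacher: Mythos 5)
Your proof is correct and is essentially the paper's own argument: the paper likewise applies Lemma 2.11 to $a_i=q_i^{\alpha}$ with the same $k$-parametrized asymmetric weights (its non-bipartite choice $p_1=\frac{k}{(k+1)n}$, $p_i=\frac{(k+1)n-k}{(k+1)n(n-1)}$ is exactly your $p_i=p_1+\frac{1}{(k+1)(n-1)}$ after your index shift), eliminates the product via Lemma 2.3, and then replaces $q_1$ by $T$ through the same monotone auxiliary function and the chain $q_1\geq T\geq\Delta_1+1>\Delta_1\geq\frac{2m}{n}\geq (t_1)^{1/n}$. The differences are either cosmetic or to your credit: you detail the bipartite case and read off the non-bipartite one, while the paper does the reverse, citing \cite{14} for (i); your bound $(nt)^{1/(n-1)}\leq\frac{2m}{n-1}\leq\frac{n\Delta_1}{n-1}\leq\Delta_1+1\leq T$ is self-contained where the paper outsources the analogous step to Theorem 3.3 of \cite{4}; and your remark about $k=0$ is a genuine correction to the paper. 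Indeed, at $k=0$ the strict inequality (16) is false: for the star $K_{1,n-1}$ one has $q_1=T=n$ and $nt=n$, so the right-hand side of (16) equals $n^{\alpha}+n-2=\sigma_{\alpha}(K_{1,n-1})$; the paper's strictness argument silently invokes the equality case of Lemma 2.11, which breaks down when the minimal weight $\lambda$ is zero, so your weaker conclusion $\geq$ at $k=0$ is the right one. One loose end on your side: in part (ii) at $k=0$ your appeal to the equality case of Lemma 2.11 degenerates in the same way, but there strictness can still be rescued, since for a connected non-bipartite graph Lemma 2.4 gives $q_1>T$ strictly (equality would force a star, which is bipartite), so the auxiliary-function step is strict and (17) holds with $>$ for all $k\geq 0$.
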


\begin{proof}
By Lemmas 2.2--2.4, 2.10 and 2.11, the inequality (16) can be proved using
similar method of Theorem 3.4 in \cite{14}. We now only prove the inequality
(17).

Setting in Lemma 2.11 $a_{i}=q_{i}^{\alpha }$, $i=1,2,\ldots ,n$ and%
\begin{equation*}
p_{1}=\frac{k}{\left( k+1\right) n}\text{, }p_{i}=\frac{\left( k+1\right) n-k%
}{\left( k+1\right) n\left( n-1\right) }\text{, }i=2,3,\ldots ,n 
\end{equation*}%
we obtain%
\begin{eqnarray*}
&&\frac{kq_{1}^{\alpha }}{\left( k+1\right) n}+\frac{\left( k+1\right) n-k}{%
\left( k+1\right) n\left( n-1\right) }\sum\limits_{i=2}^{n}q_{i}^{\alpha
}-q_{1}^{\frac{k\alpha }{\left( k+1\right) n}}\prod\limits_{i=2}^{n}q_{i}^{%
\frac{\left( k+1\right) n-k}{\left( k+1\right) n\left( n-1\right) }\alpha }
\\
&\geq &\frac{k}{\left( k+1\right) n}\sum\limits_{i=1}^{n}q_{i}^{\alpha }-%
\frac{k}{k+1}\prod\limits_{i=1}^{n}q_{i}^{\alpha /n}.
\end{eqnarray*}%
Then, by Lemma 2.3, we have%
\begin{eqnarray*}
&&\frac{kq_{1}^{\alpha }}{\left( k+1\right) n}+\frac{\left( k+1\right) n-k}{%
\left( k+1\right) n\left( n-1\right) }\left( s_{\alpha }\left( G\right)
-q_{1}^{\alpha }\right) -q_{1}^{-\frac{\alpha }{\left( k+1\right) \left(
n-1\right) }}\left( t_{1}\right) ^{\frac{\left( k+1\right) n-k}{\left(
k+1\right) n\left( n-1\right) }\alpha } \\
&\geq &\frac{k}{\left( k+1\right) n}s_{\alpha }\left( G\right) -\frac{k}{k+1}%
\left( t_{1}\right) ^{\alpha /n},
\end{eqnarray*}%
i.e.,%
\begin{equation}
s_{\alpha }\left( G\right) \geq \left( n-1\right) \left[ \frac{\left(
k+1\right) \left( t_{1}\right) ^{\frac{\left( k+1\right) n-k}{\left(
k+1\right) n\left( n-1\right) }\alpha }}{q_{1}^{\frac{\alpha }{\left(
k+1\right) \left( n-1\right) }}}+\frac{q_{1}^{\alpha }}{n-1}-k\left(
t_{1}\right) ^{\alpha /n}\right] .
\end{equation}%
Let us consider the auxiliary function%
\begin{equation*}
f\left( x\right) =\frac{\left( k+1\right) \left( t_{1}\right) ^{\frac{\left(
k+1\right) n-k}{\left( k+1\right) n\left( n-1\right) }\alpha }}{x^{\frac{%
\alpha }{\left( k+1\right) \left( n-1\right) }}}+\frac{x^{\alpha }}{n-1}. 
\end{equation*}%
It is easy to see that $f\left( x\right) $ is increasing for $x>\left(
t_{1}\right) ^{1/n}$ whether $\alpha >0$ or $\alpha <0$. By Lemmas 2.3, 2.4
and Theorem 3.3 in \cite{4}, we have%
\begin{equation*}
q_{1}\geq T\geq \Delta _{1}+1>\Delta _{1}\geq \frac{2m}{n}\geq \left(
t_{1}\right) ^{1/n} 
\end{equation*}%
Therefore%
\begin{equation*}
f\left( x\right) \geq f\left( T\right) =\frac{\left( k+1\right) \left(
t_{1}\right) ^{\frac{\left( k+1\right) n-k}{\left( k+1\right) n\left(
n-1\right) }\alpha }}{T^{\frac{\alpha }{\left( k+1\right) \left( n-1\right) }%
}}+\frac{T^{\alpha }}{n-1}. 
\end{equation*}%
Combining this with (18) we get the inequality (17). Now we assume that the
equality holds in (17). Then all inequalities in the above arguments must be
equalities. Thus $q_{1}=T$ and $q_{1}=q_{2}=\cdots =q_{n}=\frac{2m}{n}$.
Thus we have that $q_{1}=\frac{2m}{n}\leq \Delta _{1}<\Delta _{1}+1\leq T$ \
which contradicts with the result in Lemma 2.4 \cite{4}. Hence (17) cannot
become an equality.
\end{proof}

\begin{remark}
By Lemmas 2.2 and 2.4, we have that $\mu _{1}=q_{1}\geq T\geq \Delta _{1}+1$
for bipartite graphs. Then from the proof of Theorem 3.4 in \cite{14}, one
can arrive at the bound (16) improves the bound of Theorem 3.4 in \cite{14}
for bipartite graphs.
\end{remark}

Taking $k=1$ in Theorem 3.4, we have the following result.

\begin{corollary}
Let $\alpha $ be a real number with $\alpha \neq 0,1$ and let $G$ be a
connected graph with $n\geq 3$ vertices and $t$ spanning trees and also let $%
t_{1}$ and $T$ be given by (1).

i) if $G$ is bipartite, then 
\begin{equation}
s_{\alpha }\left( G\right) =\sigma _{\alpha }\left( G\right) >\left(
n-2\right) \left( nt\right) ^{\alpha /\left( n-1\right) }\left[ \frac{%
2\left( nt\right) ^{\alpha /\left[ 2\left( n-1\right) \left( n-2\right) %
\right] }}{T^{\alpha /\left[ 2\left( n-2\right) \right] }}-1\right]
+T^{\alpha }.
\end{equation}

ii) If $G$ is non-bipartite, then%
\begin{equation}
s_{\alpha }\left( G\right) >\left( n-1\right) \left( t_{1}\right) ^{\alpha
/n}\left[ \frac{2\left( t_{1}\right) ^{\alpha /\left[ 2n\left( n-1\right) %
\right] }}{T^{\alpha /\left[ 2\left( n-1\right) \right] }}-1\right]
+T^{\alpha }.
\end{equation}
\end{corollary}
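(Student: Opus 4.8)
The plan is to derive this corollary as the immediate specialization of Theorem~3.4 to the value $k=1$, exactly as the sentence introducing the corollary indicates. The bounds (16) and (17) are asserted in Theorem~3.4 for every real $k\geq 0$, and $k=1$ satisfies this constraint; moreover the hypotheses of the corollary (connected $G$, $n\geq 3$, $\alpha\neq 0,1$, and the auxiliary quantities $t$, $t_1$, $T$ from (1)) are precisely those of Theorem~3.4. Hence no new argument is needed, and the entire content of the proof is the substitution $k=1$ together with the routine simplification of the resulting exponents.

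For part (i), I would set $k=1$ in (16). This replaces the prefactor $(k+1)$ by $2$, turns the exponent $\alpha/[(k+1)(n-1)(n-2)]$ on $nt$ into $\alpha/[2(n-1)(n-2)]$, turns the exponent $\alpha/[(k+1)(n-2)]$ on $T$ into $\alpha/[2(n-2)]$, and turns the subtracted constant $k$ into $1$, which is exactly (19). For part (ii), the same substitution in (17) sends $(k+1)$ to $2$, the exponent $\alpha/[(k+1)n(n-1)]$ on $t_1$ to $\alpha/[2n(n-1)]$, the exponent $\alpha/[(k+1)(n-1)]$ on $T$ to $\alpha/[2(n-1)]$, and $k$ to $1$, producing (20).

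Since both inequalities in Theorem~3.4 are strict and, in the non-bipartite case, are shown there to be unattainable, the strictness in (19) and (20) is inherited automatically and no separate equality discussion is required. There is essentially no obstacle to overcome here: the derivation is purely algebraic. The only point deserving a moment's care is bookkeeping---ensuring that every occurrence of $k$ inside the compound exponents is substituted consistently, so that the four denominators $2(n-1)(n-2)$, $2(n-2)$, $2n(n-1)$, and $2(n-1)$ land in their correct positions.
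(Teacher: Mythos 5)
Your proposal is correct and is exactly the paper's own derivation: the paper introduces this corollary with ``Taking $k=1$ in Theorem 3.4, we have the following result,'' and offers no further argument beyond that substitution. Your bookkeeping of the exponents (and the inherited strictness of the inequalities) matches (19) and (20) precisely.
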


As in Remark 3.5, one can \ easily conclude that the bound (19) of Corollary
3.6 improves Corollary 3.5 in \cite{14}. Moreover, taking $\alpha =1/2$ in
Corollary 3.6, we have the following result.

\begin{corollary}
\cite{4} Let $G$ be a connected graph with $n\geq 3$ vertices and $t$
spanning trees and also let $t_{1}$ and $T$ be given by (1).

i) if $G$ is bipartite, then%
\begin{equation}
IE\left( G\right) =LEL\left( G\right) >\sqrt{T}+\left( n-2\right) \left(
nt\right) ^{1/\left[ 2\left( n-1\right) \right] }\left[ \frac{2\left(
nt\right) ^{1/\left[ 4\left( n-1\right) \left( n-2\right) \right] }}{T^{1/%
\left[ 4\left( n-2\right) \right] }}-1\right] .
\end{equation}

ii) if $G$ is non-bipartite, then%
\begin{equation}
IE\left( G\right) >\sqrt{T}+\left( n-1\right) \left( t_{1}\right) ^{1/\left(
2n\right) }\left[ \frac{2\left( t_{1}\right) ^{1/\left[ 4n\left( n-1\right) %
\right] }}{T^{1/\left[ 4\left( n-1\right) \right] }}-1\right] .
\end{equation}
\end{corollary}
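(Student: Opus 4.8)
The plan is to obtain Corollary 3.7 as a direct specialization of Corollary 3.6 to the exponent $\alpha = 1/2$. First I would invoke the two identities already recorded in the introduction: that $s_{1/2}(G) = \sum_{i=1}^{n}\sqrt{q_{i}} = IE(G)$, and that for a bipartite graph the Laplacian and signless Laplacian spectra coincide (Lemma 2.2), so that $IE(G) = LEL(G)$. These two facts convert the left-hand side $s_{1/2}(G)$ of each bound in Corollary 3.6 into $IE(G)$, and in the bipartite case into $IE(G) = LEL(G)$, matching the left-hand sides claimed in (21) and (22).

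Next I would carry out the substitution $\alpha = 1/2$ on the right-hand sides. In part (i) the prefactor $(nt)^{\alpha/(n-1)}$ becomes $(nt)^{1/[2(n-1)]}$, the bracketed numerator $(nt)^{\alpha/[2(n-1)(n-2)]}$ becomes $(nt)^{1/[4(n-1)(n-2)]}$, the denominator $T^{\alpha/[2(n-2)]}$ becomes $T^{1/[4(n-2)]}$, and the trailing term $T^{\alpha}$ becomes $\sqrt{T}$. Collecting these reproduces exactly the expression in (21). In part (ii) the analogous replacements $(t_{1})^{\alpha/n} \to (t_{1})^{1/(2n)}$, $(t_{1})^{\alpha/[2n(n-1)]} \to (t_{1})^{1/[4n(n-1)]}$, $T^{\alpha/[2(n-1)]} \to T^{1/[4(n-1)]}$, and $T^{\alpha} \to \sqrt{T}$ give precisely (22).

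Since Corollary 3.6 holds for every real $\alpha \neq 0,1$, and in particular for $\alpha = 1/2$, the strict inequalities are inherited verbatim, and no separate equality analysis is needed here because the proof of Theorem 3.4 already shows these bounds can never be attained. The only thing that requires care is the bookkeeping of the nested fractional exponents, ensuring each factor lands in the claimed form; this is purely mechanical. Consequently the main (and essentially sole) obstacle is that exponent arithmetic, with no genuine mathematical difficulty beyond the substitution itself.
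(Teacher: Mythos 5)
Your proposal is correct and follows exactly the paper's route: the paper obtains this corollary simply by setting $\alpha = 1/2$ in Corollary 3.6, using the identities $s_{1/2}(G) = IE(G)$ and, for bipartite graphs, $IE(G) = LEL(G)$ via Lemma 2.2, just as you do. Your exponent bookkeeping matches the stated bounds (21) and (22), so nothing further is needed.
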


\begin{theorem}
\bigskip Let $\alpha $ be a real number with $\alpha \neq 0,1$ and let $G$
be a connected graph with $n\geq 3$ vertices and $t$ spanning trees and also 
$t_{1}$ and $T$ be given by (1).

i) if $G$ is bipartite, then%
\begin{equation}
s_{\alpha }\left( G\right) =\sigma _{\alpha }\left( G\right) \geq T^{\alpha
}+\left( n-2\right) \left( \frac{nt}{T}\right) ^{\alpha /(n-2)}+\left(
\Delta _{2}^{\alpha /2}-\delta ^{\alpha /2}\right) ^{2}.
\end{equation}

ii) if $G$ is non-bipartite, then%
\begin{equation}
s_{\alpha }\left( G\right) >T^{\alpha }+\left( n-1\right) \left( \frac{t_{1}%
}{T}\right) ^{\alpha /(n-1)}+\left( \left( \Delta _{2}-1\right) ^{\alpha
/2}-\delta ^{\alpha /2}\right) ^{2}
\end{equation}%
where $\Delta _{2}$ and $\delta $ are the second maximum and the minimum
vertex degrees of the graph $G$, respectively.
\end{theorem}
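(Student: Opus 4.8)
The plan is to prove both parts with a single three-step template: isolate the largest eigenvalue $q_1$, treat a ``boundary pair'' of eigenvalues through a sum-of-squares identity, bound the remaining ``middle'' eigenvalues by AM--GM, and then optimise the resulting expression over $q_1$. I would carry out the non-bipartite part (ii) in detail; part (i) is the same argument with $t_1=\prod_{i=1}^{n}q_i$ replaced by $nt=\prod_{i=1}^{n-1}q_i$ (recall $q_n=0$ and $s_\alpha=\sigma_\alpha$ for connected bipartite $G$, by Lemmas 2.2--2.3), the boundary pair $(q_2,q_n)$ replaced by $(q_2,q_{n-1})$, and the estimates $q_2\ge\Delta_2-1$, $q_n<\delta$ replaced by $q_2\ge\Delta_2$, $q_{n-1}\le\delta$ supplied by Lemmas 2.8--2.9.

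First I would use the identity $q_2^\alpha+q_n^\alpha=(q_2^{\alpha/2}-q_n^{\alpha/2})^2+2(q_2q_n)^{\alpha/2}$ to write
$$s_\alpha(G)=q_1^\alpha+\bigl(q_2^{\alpha/2}-q_n^{\alpha/2}\bigr)^2+\Bigl[\,2(q_2q_n)^{\alpha/2}+\sum_{i=3}^{n-1}q_i^\alpha\Bigr].$$
The bracket is a sum of exactly $n-1$ positive numbers, namely two equal copies of $(q_2q_n)^{\alpha/2}$ together with $q_3^\alpha,\dots,q_{n-1}^\alpha$, so ordinary AM--GM and Lemma 2.3 give
$$2(q_2q_n)^{\alpha/2}+\sum_{i=3}^{n-1}q_i^\alpha\ \ge\ (n-1)\Bigl(\textstyle\prod_{i=2}^{n}q_i\Bigr)^{\alpha/(n-1)}=(n-1)\Bigl(\tfrac{t_1}{q_1}\Bigr)^{\alpha/(n-1)}.$$
Doubling the geometric mean of the boundary pair is precisely what makes the term count equal $n-1$ (and $n-2$ in the bipartite case, where the middle runs $i=3,\dots,n-2$). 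For the boundary term I would factor the difference of squares $(q_2^{\alpha/2}-q_n^{\alpha/2})^2-((\Delta_2-1)^{\alpha/2}-\delta^{\alpha/2})^2$ and verify that its two factors carry the same sign for every admissible $\alpha$; this sign bookkeeping, especially when $\Delta_2-1<\delta$ so that $(\Delta_2-1)^{\alpha/2}-\delta^{\alpha/2}$ is negative, is the one genuinely delicate point and I expect it to be the main obstacle.

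It then remains to minimise $g(x)=x^\alpha+(n-1)(t_1/x)^{\alpha/(n-1)}$ over the range of $q_1$. Differentiation gives $g'(x)=\alpha x^{-1}\bigl[x^\alpha-(t_1/x)^{\alpha/(n-1)}\bigr]$, whose only sign change is at $x=t_1^{1/n}$, and a short check (for $\alpha<0$ the two sign reversals cancel) shows that $g$ increases on $[\,t_1^{1/n},\infty)$ for every $\alpha\neq0$. Lemma 2.4 together with the AM--GM chain $q_1\ge T\ge\Delta_1+1>\Delta_1\ge\frac{2m}{n}\ge t_1^{1/n}$, the same chain used in Theorem 3.4, then yields $g(q_1)\ge g(T)$, and combining the three estimates assembles the right-hand side of (24).

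Finally I would obtain strictness exactly as in Theorem 3.4: equality throughout would force $q_1=T$ and $q_1=q_2=\cdots=q_n=\frac{2m}{n}$, whence $q_1=\frac{2m}{n}\le\Delta_1<\Delta_1+1\le T$, a contradiction, so (24) is strict. For the bipartite bound (23) the identical computation makes $g$ increasing on $[\,(nt)^{1/(n-1)},\infty)$ and the relevant chain becomes $q_1\ge T\ge\Delta_1+1\ge\frac{2m}{n-1}\ge(nt)^{1/(n-1)}$, where the middle step uses $2m\le n(n-1)$; here equality is actually attainable, for instance by $K_{1,n-1}$, for which $q_1=T=n$, $q_2=\cdots=q_{n-1}=1$ and $\Delta_2=\delta=1$ make all three inequalities tight, while the degenerate case $n=3$ (where the boundary indices $2$ and $n-1$ coincide) is verified directly on $P_3$.
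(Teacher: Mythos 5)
Your architecture is in fact the paper's own: the paper also splits off $q_{1}^{\alpha }$, establishes the intermediate bound
\begin{equation*}
s_{\alpha }\left( G\right) \geq q_{1}^{\alpha }+\left( n-1\right) \left(
\prod_{i=2}^{n}q_{i}\right) ^{\alpha /\left( n-1\right) }+\left(
q_{2}^{\alpha /2}-q_{n}^{\alpha /2}\right) ^{2},
\end{equation*}
replaces $\prod_{i=2}^{n}q_{i}$ by $t_{1}/q_{1}$ (Lemma 2.3), inserts Lemmas 2.5 and 2.6 into the squared term, and finishes with the monotonicity of $f\left( x\right) =x^{\alpha }+\left( n-1\right) \left( t_{1}/x\right) ^{\alpha /\left( n-1\right) }$ on $x>t_{1}^{1/n}$ and the chain $q_{1}\geq T\geq \Delta _{1}+1>\Delta _{1}\geq 2m/n\geq t_{1}^{1/n}$. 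The only divergence is how the displayed inequality is obtained: the paper iterates Lemma 2.12 down to $p=2$ to get $A_{p}\geq G_{p}+\frac{1}{p}\left( \sqrt{a_{1}}-\sqrt{a_{2}}\right) ^{2}$, whereas you prove the identical inequality by plain AM--GM after doubling the geometric mean of the boundary pair; these are equivalent, and your derivation is a perfectly good elementary substitute.

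The genuine gap is precisely the step you flag as the main obstacle, and it cannot be closed the way you propose, because the required inequality $\left( q_{2}^{\alpha /2}-q_{n}^{\alpha /2}\right) ^{2}\geq \left( \left( \Delta _{2}-1\right) ^{\alpha /2}-\delta ^{\alpha /2}\right) ^{2}$ is false in general: for $G=K_{4}$ one has $q_{2}=q_{3}=q_{4}=2$, so the left-hand side is $0$, while $\Delta _{2}-1=2$ and $\delta =3$ make the right-hand side $\left( 2^{\alpha /2}-3^{\alpha /2}\right) ^{2}>0$ for every $\alpha \neq 0$. No bookkeeping of the signs of the two factors can rescue a false inequality; the comparison genuinely breaks whenever $\Delta _{2}-1<\delta $ (e.g.\ regular non-bipartite graphs) and $q_{2}$ is close to $q_{n}$. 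What saves the bipartite part (i) is the nesting $\mu _{n-1}\leq \delta \leq \Delta _{2}\leq \mu _{2}$ coming from Lemmas 2.8 and 2.9: since $t\mapsto t^{\alpha /2}$ is monotone, both differences then have the same sign and the squares compare correctly for either sign of $\alpha $; in the non-bipartite case the ``$-1$'' of Lemma 2.5 destroys this nesting. You should be aware that the published proof commits exactly the same unjustified step (its passage to (26) ``considering Lemmas 2.3, 2.5 and 2.6'' is asserted without argument), so what you have located is a hole in the paper's own proof rather than a missing idea peculiar to your attempt. Two smaller repairs: your strictness argument for (24) does not follow from your chain, since equality in your AM--GM forces only $\sqrt{q_{2}q_{n}}=q_{3}=\cdots =q_{n-1}$, not $q_{1}=\cdots =q_{n}=2m/n$; the clean source of strictness is Lemma 2.4, by which $q_{1}=T$ would force $G\cong K_{1,n-1}$, a bipartite graph, so $q_{1}>T$ and hence $f\left( q_{1}\right) >f\left( T\right) $ already makes (24) strict. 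Your verification that $K_{1,n-1}$ attains equality in (23) is correct.
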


\begin{proof}
Using Lemmas 2.2--2.4, 2.8, 2.9 and 2.12, one can prove inequality (23)
similar to the proof of Theorem 3.9 in \cite{14}. Here we only prove the
inequality (24).

By Lemma 2.12, we have%
\begin{equation*}
p\left( A_{p}-G_{p}\right) \geq \left( p-1\right) \left(
A_{p-1}-G_{p-1}\right) \geq \cdots \geq 2\left( A_{2}-G_{2}\right) 
\end{equation*}%
i.e.,%
\begin{equation}
A_{p}\geq G_{p}+\frac{2}{p}\left( \frac{a_{1+}a_{2}}{2}-\sqrt{a_{1}a_{2}}%
\right) =G_{p}+\frac{1}{p}\left( \sqrt{a_{1}}-\sqrt{a_{2}}\right) ^{2}
\end{equation}%
see, \cite{14}. Setting $p=n-1$, $\left( a_{1},a_{2},\ldots ,a_{n-1}\right)
=\left( q_{2}^{\alpha },q_{3}^{\alpha },\ldots ,q_{n}^{\alpha }\right) $ and 
$a_{1}=q_{2}^{\alpha }$, $a_{2}=q_{n}^{\alpha }$ in (25), we obtain%
\begin{equation*}
s_{\alpha }\left( G\right) =\dsum\limits_{i=1}^{n}q_{i}^{\alpha }\geq
q_{1}^{\alpha }+(n-1)\left( \dprod_{i=2}^{n}q_{i}\right) ^{\alpha
/(n-1)}+\left( q_{2}^{\alpha /2}-q_{n}^{\alpha /2}\right) ^{2}. 
\end{equation*}%
Considering Lemmas 2.3, 2.5 and 2.6, we have 
\begin{equation}
s_{\alpha }\left( G\right) =\dsum\limits_{i=1}^{n}q_{i}^{\alpha }\geq
q_{1}^{\alpha }+\left( n-1\right) \left( \frac{t_{1}}{q_{1}}\right) ^{\alpha
/\left( n-1\right) }+\left( \left( \Delta _{2}-1\right) ^{\alpha /2}-\delta
^{\alpha /2}\right) ^{2}.
\end{equation}%
Let us consider the auxiliary function%
\begin{equation*}
f\left( x\right) =x^{\alpha }+\left( n-1\right) \left( \frac{t_{1}}{x}%
\right) ^{\alpha /\left( n-1\right) }. 
\end{equation*}%
Note that $f\left( x\right) $ is increasing for $x>\left( t_{1}\right)
^{1/n} $ for both $\alpha >0$ and $\alpha <0$ \cite{3}. Then by Lemmas 2.3,
2.4 and Theorem 4.9 in \cite{3}, we have%
\begin{equation*}
f\left( x\right) \geq f\left( T\right) =T^{\alpha }+\left( n-1\right) \left( 
\frac{t_{1}}{T}\right) ^{\alpha /(n-1)}. 
\end{equation*}%
Combining this with Eq. (26), we get the inequality (24).

\begin{remark}
By Lemmas 2.2 and 2.4, we have that $\mu _{1}=q_{1}\geq T\geq \Delta _{1}+1$
for bipartite graphs. Then, from the proof of Theorem 3.9 in \cite{14}, one
can arrive at the bound (23) improves the bound of Theorem 3.9 in \cite{14}
for bipartite graphs. \ Moreover, it is clear that \ the results of Theorem
3.8 are better than the results of Theorem 4.9 in \cite{3}.
\end{remark}
\end{proof}

Taking $\alpha =1/2$ in Theorem 3.8, we get the following result on $IE$.

\begin{corollary}
Let $G$ be a connected graph with $n\geq 3$ vertices and $t$ spanning trees
and also let $t_{1}$ and $T$ be given by (1).

i) if $G$ is bipartite, then%
\begin{equation}
IE\left( G\right) =LEL\left( G\right) \geq \sqrt{T}+\left( n-2\right) \left( 
\frac{nt}{T}\right) ^{1/\left( 2(n-2)\right) }+\left( \Delta
_{2}^{1/4}-\delta ^{1/4}\right) ^{2}.
\end{equation}

ii) if $G$ is non-bipartite, then%
\begin{equation}
IE\left( G\right) >\sqrt{T}+\left( n-1\right) \left( \frac{t_{1}}{T}\right)
^{1/\left( 2(n-1)\right) }+\left( \left( \Delta _{2}-1\right) ^{1/4}-\delta
^{1/4}\right) ^{2}.
\end{equation}%
where $\Delta _{2}$ and $\delta $ are the second maximum and the minimum
vertex degrees of the graph $G$, respectively.
\end{corollary}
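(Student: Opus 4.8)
The plan is to derive this corollary by direct specialization of Theorem 3.8 to the exponent $\alpha = 1/2$, with no fresh spectral input. First I would observe that $\alpha = 1/2$ is admissible, since $1/2 \neq 0,1$, so both parts (i) and (ii) of Theorem 3.8 are available. I would then invoke the identities recorded in the introduction: $s_{1/2}\left(G\right) = IE\left(G\right)$ for every graph, and, for a bipartite graph, $\sigma_{1/2}\left(G\right) = LEL\left(G\right)$ (since the Laplacian and signless Laplacian spectra coincide there, by Lemma 2.2). These identities rewrite the left-hand sides of (23) and (24) in terms of the incidence energy, and in the bipartite case additionally as the Laplacian-energy-like invariant.

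The core of the argument is then the exponent bookkeeping on the right-hand sides. Substituting $\alpha = 1/2$ into the bipartite bound (23), the leading term $T^{\alpha}$ becomes $\sqrt{T}$, the exponent $\alpha/(n-2)$ becomes $1/\left(2(n-2)\right)$, and the half-powers in $\left(\Delta_2^{\alpha/2} - \delta^{\alpha/2}\right)^2$ become $\left(\Delta_2^{1/4} - \delta^{1/4}\right)^2$; this reproduces (27), retaining the $\geq$ of the bipartite case. Performing the identical substitution in the non-bipartite bound (24) --- where $n-2$ is replaced by $n-1$, $nt$ by $t_1$, and $\Delta_2$ by $\Delta_2 - 1$ inside the squared term --- yields (28), and the strict inequality $>$ of Theorem 3.8(ii) carries over unchanged.

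Since Theorem 3.8 already establishes both inequalities for every admissible $\alpha$, there is genuinely no obstacle here: the corollary is a clean specialization, and the only point to verify is that the arithmetic $\frac{1}{2}\cdot\frac{1}{2}=\frac{1}{4}$ and $\frac{1}{2}\cdot\frac{1}{n-2}=\frac{1}{2(n-2)}$ delivers exactly the displayed exponents. I therefore expect the written proof to amount to a single sentence citing Theorem 3.8 with $\alpha = 1/2$ together with the identifications $s_{1/2}=IE$ and $\sigma_{1/2}=LEL$.
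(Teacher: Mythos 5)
Your proposal is correct and is exactly the paper's own route: the paper derives Corollary 3.10 by the single remark ``Taking $\alpha =1/2$ in Theorem 3.8, we get the following result on $IE$,'' relying implicitly on the identities $s_{1/2}=IE$ and, for bipartite graphs, $\sigma _{1/2}=LEL$, just as you describe. Your exponent bookkeeping ($\tfrac{1}{2}\cdot \tfrac{1}{2}=\tfrac{1}{4}$, $\tfrac{1}{2}\cdot \tfrac{1}{n-2}=\tfrac{1}{2(n-2)}$, $\tfrac{1}{2}\cdot \tfrac{1}{n-1}=\tfrac{1}{2(n-1)}$) and your preservation of $\geq$ versus $>$ in the two cases match the stated inequalities (27) and (28) precisely.
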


\begin{remark}
It is clear that the results of Corollary 3.10 improve the results of
Theorem 4.8 in \cite{3}.
\end{remark}

\begin{remark}
We finally note that, if we can establish a new lower bound such that $%
q_{1}\geq \beta \geq T$, then we can improve the results in Theorems 3.4 and
3.8.
\end{remark}

\textbf{Acknowledgments. }The authors are partially supported by T\"{U}B\.{I}%
TAK and the Office of Sel\c{c}uk University Research Project (BAP).

\end{document}